\newtheorem{theorem}{Theorem}
\newtheorem{corollary}[theorem]{Corollary}
\newtheorem{lemma}[theorem]{Lemma}
\newtheorem{proposition}[theorem]{Proposition}
\newtheorem{remark}[theorem]{Remark}
\numberwithin{theorem}{section}
\numberwithin{equation}{section}
\newcommand{\g}{\mathbb{G}}
\newcommand{\n}{\mathbb{N}}
\begin{document}
\title{A theorem for random Fourier series on compact quantum groups}

\author{Sang-Gyun Youn}

\address{Sang-Gyun Youn : Department of Mathematical Sciences, Seoul National University,
San56-1 Shinrim-dong Kwanak-gu, Seoul 151-747, Republic of Korea}
\email{yun87654@snu.ac.kr}

\keywords{Random Fourier series, compact quantum group, non self-adjoint operator algebra}
\thanks{2010 \it{Mathematics Subject Classification}.
\rm{Primary 46L89, Secondary 20G42, 43A30}.\\ The author is supported by TJ Park Science Fellowship}

\begin{abstract}
Helgason showed that a given measure $f\in M(G)$ on a compact group $G$ should be in $L^2(G)$ automatically if all random Fourier series of $f$ are in $M(G)$. We explore a natural analogue of the theorem in the framework of compact quantum groups and apply the obtained results to study complete representability problem for convolution algebras of compact quantum groups as an operator algebra. 
\end{abstract}

\maketitle

\section{Introduction}

The random Fourier series theory has been extensively studied for a long period of time \cite{PaZy32}, \cite{Ka93}, \cite{SaZy54}, \cite{Hu51}, \cite{Du67}, \cite{Fe75}, \cite{Ma78} and \cite{MaPi78} and most of the results turned out to be valid for Banach space-valued functions on compact groups  \cite{FiRi66}, \cite{Kw76} and \cite{MaPi81}. Moreover, it has been found that these studies can be applied to various topics of harmonic analysis \cite{Pi77}, \cite{Pi78} and \cite{Ri75}.

A theorem of Helgason \cite{He57}, which is a subject of this paper, is one of the results of the theory of random Fourier series. It is an improvement of the Littlewood's work on circle \cite{Li26}.

Throughout this paper, we denote by $\mathrm{Irr}(G)$ a maximal family of mutually inequivalent irreducible unitary representations of a compact group $G$ and by $\mathcal{U}$ the product group $\displaystyle \prod_{\pi\in \mathrm{Irr}(G)}U(n_{\pi})$ where $U(d)$ is the group of unitary matrices of size $d$ and $n_{\pi}$ is the dimension of $\pi\in \mathrm{Irr}(G)$. Also, we denote by 
\[f\sim \sum_{\pi\in \mathrm{Irr}(G)}n_{\pi}\mathrm{tr}(\widehat{f}(\pi)\pi(x))\]
the Fourier-Stieltjes series of $f\in M(G)$, where $(\widehat{f}(\pi))_{\pi\in \mathrm{Irr}(G)}$ is the sequence of Fourier coefficients of $f\in M(G)$.

\begin{theorem}[S.Helgason, \cite{He57}]\label{thm1}

Let $G$ be a compact group and fix a measure $f\in M(G)$. Suppose that there exists $\mu_U\in M(G)$ such that
$\displaystyle \mu_U\sim \sum_{\pi\in \widehat{G}}n_{\pi}\mathrm{tr}(U_{\pi}\widehat{f}(\pi)\pi(x))$ for any $U=(U_{\pi})_{\pi\in \widehat{G}}\in \mathcal{U}$. Then 
\[\sum_{\pi\in \widehat{G}}n_{\pi}\mathrm{tr}(\widehat{f}(\pi)^*\widehat{f}(\pi))<\infty.\]

\end{theorem}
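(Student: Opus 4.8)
The plan is to run the classical randomization argument from the theory of random Fourier series, adapted to the non-abelian setting: after checking that the family $\{\mu_U:U\in\mathcal{U}\}$ is uniformly bounded in $M(G)$, smooth each $\mu_U$ into a genuine integrable function, integrate over the Haar measure of $\mathcal{U}$, and read off the desired $\ell^2$-type sum by a Khintchine inequality.

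\emph{Step 1: a uniform bound $M:=\sup_{U\in\mathcal{U}}\|\mu_U\|_{M(G)}<\infty$.} For $N\in\mathbb{N}$ set $A_N=\{U\in\mathcal{U}:\|\mu_U\|_{M(G)}\le N\}$. If $U_\lambda\to U$ in $\mathcal{U}$ with $\|\mu_{U_\lambda}\|\le N$, then any weak-$*$ cluster point $\nu\in M(G)$ of $(\mu_{U_\lambda})$ has $\|\nu\|\le N$ and $\widehat{\nu}(\pi)=\lim_\lambda (U_\lambda)_\pi\widehat{f}(\pi)=U_\pi\widehat{f}(\pi)=\widehat{\mu_U}(\pi)$ for every $\pi$, so $\nu=\mu_U$; hence $A_N$ is closed. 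Since $\mathcal{U}=\bigcup_N A_N$ and $\mathcal{U}$ is compact, Baire's theorem gives an $N$ for which $A_N$ contains a basic open set $W=\prod_{\pi\in F}W_\pi\times\prod_{\pi\notin F}U(n_\pi)$ with $F$ finite. Given an arbitrary $U\in\mathcal{U}$, replace its coordinates in $F$ by those of a fixed $U^0\in W$ to obtain $\widetilde U\in A_N$; then $\mu_U-\mu_{\widetilde U}$ is a trigonometric polynomial supported on $F$, so $\|\mu_U-\mu_{\widetilde U}\|_{M(G)}\le\|\mu_U-\mu_{\widetilde U}\|_\infty\le 2\sum_{\pi\in F}n_\pi\|\widehat f(\pi)\|_{S^1}$, and therefore $\|\mu_U\|_{M(G)}\le M:=N+2\sum_{\pi\in F}n_\pi\|\widehat f(\pi)\|_{S^1}$ for all $U$.

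\emph{Step 2: smoothing and Khintchine.} Fix a central approximate identity $(k_\alpha)$: continuous functions with $k_\alpha\ge0$, $\int_G k_\alpha=1$, $\widehat{k_\alpha}(\pi)=\gamma_\alpha(\pi)\,\mathrm{Id}_{n_\pi}$, $|\gamma_\alpha(\pi)|\le1$, and $\gamma_\alpha(\pi)\to1$ for each $\pi$ (obtained by averaging an ordinary approximate identity over conjugation). Then $g_{U,\alpha}:=k_\alpha*\mu_U$ is continuous, $\|g_{U,\alpha}\|_{L^1(G)}\le\|\mu_U\|_{M(G)}\le M$, and $g_{U,\alpha}(x)=\sum_\pi n_\pi\gamma_\alpha(\pi)\,\mathrm{tr}(U_\pi\widehat f(\pi)\pi(x))$. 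For fixed $x$, this is a bounded function of $U\in\mathcal{U}=\prod_\pi U(n_\pi)$ whose Peter--Weyl expansion consists of the pairwise orthogonal, mean-zero summands $n_\pi\gamma_\alpha(\pi)\,\mathrm{tr}(U_\pi\widehat f(\pi)\pi(x))$, so (using $\|\widehat f(\pi)\pi(x)\|_{HS}=\|\widehat f(\pi)\|_{HS}$ and Schur orthogonality on each $U(n_\pi)$) one gets $\int_{\mathcal{U}}|g_{U,\alpha}(x)|^2\,dU=\sum_\pi n_\pi|\gamma_\alpha(\pi)|^2\,\mathrm{tr}(\widehat f(\pi)^*\widehat f(\pi))$, independently of $x$. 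The analytic heart is a Khintchine inequality: for a sum $Z=\sum_\pi c_\pi\,\mathrm{tr}(U_\pi A_\pi)$ of independent mean-zero terms with the $U_\pi$ Haar on $U(n_\pi)$ one has $\|Z\|_{L^1}\ge c_0\|Z\|_{L^2}$ with an absolute $c_0>0$, which follows from $\|Z\|_{L^1}\ge\|Z\|_{L^2}^3/\|Z\|_{L^4}^2$ together with the fourth-moment bound $\int_{U(n)}|\mathrm{tr}(UA)|^4\,dU\le C\bigl(\int_{U(n)}|\mathrm{tr}(UA)|^2\,dU\bigr)^2$ coming from the Weingarten calculus. Applying this pointwise in $x$ and integrating over $G$,
\[
M\;\ge\;\int_{\mathcal{U}}\|g_{U,\alpha}\|_{L^1(G)}\,dU\;=\;\int_G\!\!\int_{\mathcal{U}}|g_{U,\alpha}(x)|\,dU\,dx\;\ge\;c_0\Bigl(\sum_\pi n_\pi|\gamma_\alpha(\pi)|^2\,\mathrm{tr}(\widehat f(\pi)^*\widehat f(\pi))\Bigr)^{1/2}.
\]
Thus $\sum_{\pi\in S}n_\pi|\gamma_\alpha(\pi)|^2\,\mathrm{tr}(\widehat f(\pi)^*\widehat f(\pi))\le (M/c_0)^2$ for every finite $S$ and every $\alpha$; for fixed $S$, letting $\alpha\to\infty$ (so $\gamma_\alpha(\pi)\to1$ for the finitely many $\pi\in S$) and then taking the supremum over $S$ gives $\sum_\pi n_\pi\,\mathrm{tr}(\widehat f(\pi)^*\widehat f(\pi))\le (M/c_0)^2<\infty$.

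I expect the main obstacle to be the bridge between the hard object (the measures $\mu_U$, which need not be functions and whose norms are not directly computable) and the soft probabilistic side: the smoothing $k_\alpha*\mu_U$ is precisely what makes $\int_{\mathcal{U}}\|k_\alpha*\mu_U\|_{L^1}\,dU$ accessible by Fubini, and the matrix-coefficient Khintchine inequality is what turns it into the $\ell^2$-type sum. The remaining ingredients --- uniform boundedness via Baire, existence of a central approximate identity --- are routine, but the argument genuinely uses the group structure (for the convolution and for central approximate identities), which is exactly the point that will have to be reworked in the quantum-group setting.
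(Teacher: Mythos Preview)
Your argument is correct and gives an essentially self-contained classical proof. It differs from the route the paper takes (the paper does not prove Theorem~\ref{thm1} directly, but its proof of Theorem~\ref{thm-Kac} specializes to it, and Remark~\ref{rmk-Khintchine} spells this out). The main structural differences are:

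\begin{itemize}
\item \emph{Uniform boundedness.} You use a Baire-category argument on the compact group $\mathcal{U}$. The paper instead first extends the hypothesis from unitaries $U\in\mathcal{U}$ to arbitrary contractions $B\in\ell^{\infty}(\widehat{G})$ by writing each contraction as an average of four unitaries (Proposition~\ref{lem2}), and then invokes the closed graph theorem for the linear map $B\mapsto \mu_B$.
\item \emph{Choice of randomization.} Because the paper has the map defined on all of $\ell^{\infty}(\widehat{G})$, it may plug in Gaussian random matrices $G_{n_\pi}=(n_\pi^{-1/2}g^\pi_{i,j})$, whose operator norms are uniformly bounded in expectation (Lemma~\ref{lem-gaussian}), and then apply the \emph{scalar} Gaussian Khintchine inequality pointwise in $x\in G$ (Remark~\ref{rmk-Khintchine}). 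You stay with Haar-random unitaries and instead prove a Khintchine inequality for sums $\sum_\pi c_\pi\,\mathrm{tr}(U_\pi A_\pi)$ via the Paley--Zygmund/$L^1$--$L^2$--$L^4$ trick and a dimension-free fourth-moment bound for $\mathrm{tr}(UA)$ on $U(n)$.
\item \emph{Passage from measures to functions.} The paper works with finite truncations $S\subseteq\mathrm{Irr}(G)$: since $f_B$ with $B$ supported on $S$ is a trigonometric polynomial, its $M(G)$-norm equals its $L^1$-norm, so no smoothing is needed. You achieve the same end by convolving with a central approximate identity and then letting $\gamma_\alpha\to1$ on finite sets; this is correct but slightly more machinery than necessary.
\end{itemize}

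What each approach buys: the paper's Gaussian route uses only the most classical Khintchine inequality and, more importantly, is exactly what generalizes to the quantum-group setting via the non-commutative Khintchine inequality (Theorem~\ref{thm-Khintchine}). Your Haar-unitary route is conceptually direct (you never leave the hypothesis set $\mathcal{U}$) but relies on a matrix-coefficient moment bound that, while true, is less standard and does not have an obvious quantum analogue --- which is precisely the point of the paper's detour through Gaussians.
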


After the works of S.Woronowicz, which are \cite{Wo87b}, \cite{Wo87a} for compact quantum groups, the theory of topological quantum groups has been greatly elevated \cite{KuVa00}, \cite{Ku01}, \cite{KuVa03} and harmonic analysis on quantum groups has also been vigorously studied in \cite{Ca13}, \cite{FHLUZ17}, \cite{JMP14}, \cite{JPPP17} and \cite{Yo17}. In particular, progress has been made in research on random Fourier series \cite{Wa17}. The main purpose of this paper is to generalize Theorem \ref{thm1} in the framework of compact quantum groups in a natural manner through mobilizing well-known studies on stochastic behavior of some special vector-valued random variables(e.g. Rademacher and Gaussian variables), namely the non-commutative Khintchine inequalities \cite{Lu86}, \cite{LuPi91} and (co-)type studies \cite{To74}, \cite{Fa87}.

The Fourier analysis on compact quantum groups has been developed in respect of traditional philosophy of the Fourier analysis on compact groups to large extent. However, a visible difference appears in the Fourier expansion. In fact, our random Fourier series of $f\in M(\g)$ will be described as
\[ f_U\sim  \sum_{\alpha\in \mathrm{Irr}(\g)}d_{\alpha}\mathrm{tr}(U_{\alpha}\widehat{f}(\alpha)Q_{\alpha}u^{\alpha})\]
for any $U=(U_{\alpha})_{\alpha\in \mathrm{Irr}(\g)}\in \mathcal{U}$. We will explain the details of the above expansion in Section \ref{sec:pre}.

As in the classical setting, $f\in L^2(\g)$ will imply that all of random fourier series $f_U$ are in $L^2(\g)\subseteq L^1(\g)$ by the Plancherel identity on compact quantum groups (Proposition \ref{prop-easy}). Accordingly what we have to do is to demonstrate the converse direction and main results of this paper are as follows.

\begin{theorem}\label{thm3}
In the above notation, let us suppose that $f_U\in M(\g)$ for all $\displaystyle U=(U_{\alpha})_{\alpha\in \mathrm{Irr}(\g)}\in \mathcal{U}$.

\begin{enumerate}
\item If $\g$ is of Kac type, then we have
\[\sum_{\alpha\in \mathrm{Irr}(\g)}n_{\alpha}\mathrm{tr}(\widehat{f}(\alpha)^*\widehat{f}(\alpha))<\infty.\]
\item For general compact quantum groups, we have
\[\sum_{\alpha\in \mathrm{Irr}(\g)}\frac{d_{\alpha}}{n_{\alpha}}\mathrm{tr}(Q_{\alpha}\widehat{f}(\alpha)^*\widehat{f}(\alpha))<\infty.\]
\end{enumerate}
\end{theorem}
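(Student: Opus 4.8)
The plan is to combine a uniform–boundedness argument (closed graph / Baire) with the non‑commutative Khintchine inequalities; the weights $\tfrac{d_\alpha}{n_\alpha}\mathrm{tr}(Q_\alpha\,\cdot\,)$ are meant to come out of averaging over the groups $U(n_\alpha)$ together with the modular data $Q_\alpha$.

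\emph{Step 1: $\sup_{U\in\mathcal U}\|f_U\|_{M(\g)}<\infty$.} Since $M(\g)=C_u(\g)^*$, closed balls are weak-$*$ compact, so the sets $K_m=\{U\in\mathcal U:\|f_U\|_{M(\g)}\le m\}$ are closed in the compact group $\mathcal U$: if $U_i\to U$ in $\mathcal U$ and $f_{U_i}\to\mu$ weak-$*$, then comparing the values at the matrix coefficients $u^\alpha_{ij}$ (which depend continuously on $U_\alpha$) forces $\mu=f_U$. By hypothesis $\bigcup_m K_m=\mathcal U$, so Baire's theorem yields $m_0$ with $\mathrm{int}\,K_{m_0}\ne\varnothing$; hence $\|f_U\|_{M(\g)}\le m_0$ whenever $U$ coincides with a fixed $U^{(0)}$ on a finite set $F_0\subseteq\mathrm{Irr}(\g)$. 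For arbitrary $U$, modifying its $F_0$-coordinates to those of $U^{(0)}$ changes $f_U$ by a \emph{fixed} trigonometric polynomial supported on $F_0$, whose Fourier coefficients are bounded by $2\|\widehat f(\alpha)\|$ and which therefore has $M(\g)$-norm at most some $C_{F_0}$ depending only on $F_0$ and $f$. Thus $\sup_U\|f_U\|_{M(\g)}\le m_0+C_{F_0}=:C<\infty$.

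\emph{Step 2: localisation to a finite set.} Fix a finite $F\subseteq\mathrm{Irr}(\g)$ and choose a central, positive–definite ``Fej\'er-type'' element $\phi\in\mathrm{Pol}(\g)$ with $\widehat\phi(\alpha)$ invertible and close to $I_{n_\alpha}$ for $\alpha\in F$. Using the contractive module action $M(\g)\times L^1(\g)\to L^1(\g)$ one gets $\|f_U*\phi\|_{L^1(\g)}\le C\|\phi\|_{L^1(\g)}$ for every $U$, while
\[ f_U*\phi=\sum_\alpha d_\alpha\,\mathrm{tr}\big(U_\alpha\widehat f(\alpha)\widehat\phi(\alpha)Q_\alpha u^\alpha\big)\in\mathrm{Pol}(\g) \]
is a random Fourier series whose $\alpha$-block is a linear image of the Haar-random matrix $U_\alpha$, the blocks being independent over $\alpha$. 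Averaging over $\mathcal U$ gives $\mathbb E_U\|f_U*\phi\|_{L^1(\g)}\le C\|\phi\|_{L^1(\g)}$, and it remains to bound this expectation from \emph{below}.

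\emph{Step 3: non-commutative Khintchine and conclusion.} After the standard comparison of Haar-distributed $U_\alpha$ with independent Gaussian matrices, the Lust-Piquard and Lust-Piquard--Pisier inequality in $L^1(\g)$ bounds $\mathbb E_U\|f_U*\phi\|_{L^1(\g)}$ below by the row/column square function of the blocks. Computing this square function through the $L^2$-orthogonality relations on $\g$ — each integration $\int_{U(n_\alpha)}|\mathrm{tr}(U_\alpha M)|^2\,dU_\alpha=\tfrac1{n_\alpha}\|M\|_{HS}^2$ producing the factor $\tfrac1{n_\alpha}$, the matrices $Q_\alpha$ entering through the non-tracial Plancherel weight, and the uniform cotype-$2$ constants of the Schatten classes $S_1^{n_\alpha}$ (Tomczak-Jaegermann) handling the matrix structure of each block — one should arrive, for $\widehat\phi$ close enough to $I$ on $F$, at
\[ \Big(\sum_{\alpha\in F}\frac{d_\alpha}{n_\alpha}\,\mathrm{tr}\big(Q_\alpha\widehat f(\alpha)^*\widehat f(\alpha)\big)\Big)^{1/2}\ \lesssim\ C\,\|\phi\|_{L^1(\g)}. \]
Letting $F\uparrow\mathrm{Irr}(\g)$ (keeping the constants controlled) gives statement (2); in the Kac case $Q_\alpha=I$ and $d_\alpha=n_\alpha$, so the left side is the $L^2(\g)$-norm of the $F$-partial sum of $f$ and statement (1) follows. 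The converse direction is the Plancherel computation of Proposition \ref{prop-easy}.

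\emph{Main obstacle.} The delicate point is Step 3: extracting from the Khintchine/cotype estimate \emph{exactly} the weights $\tfrac{d_\alpha}{n_\alpha}\mathrm{tr}(Q_\alpha\,\cdot\,)$ — the non-tracial modular structure of a general compact quantum group is precisely what degrades the conclusion to (2) rather than the full Plancherel norm — together with, for non-co-amenable $\g$, arranging the regularising kernels $\phi$ so that the estimate survives the limit $F\uparrow\mathrm{Irr}(\g)$.
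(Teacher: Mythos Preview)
Your overall architecture --- uniform boundedness followed by a Khintchine/cotype lower bound --- matches the paper, but there is one genuine error and one unnecessary detour.

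\textbf{The error.} Statement (1) does \emph{not} follow from (2) by specialising to $Q_\alpha=I$, $d_\alpha=n_\alpha$: in the Kac case (2) only yields $\sum_\alpha \mathrm{tr}(\widehat f(\alpha)^*\widehat f(\alpha))<\infty$, whereas (1) asserts the stronger $\sum_\alpha n_\alpha\,\mathrm{tr}(\widehat f(\alpha)^*\widehat f(\alpha))<\infty$. The paper proves (1) and (2) by \emph{different} mechanisms: for (1) it uses the full non-commutative Khintchine inequality in $L^1$ (which requires the Haar state to be tracial, i.e.\ Kac type) together with a specific choice of dual family $y^\alpha_{i,j}=\sqrt{n_\alpha}\,[(A(\alpha)u^\alpha)_{j,i}]^*$ that extracts exactly the Hilbert--Schmidt norm; for (2) it uses only cotype~2 of $L^1(\g)$ together with an elementary lower bound on $\|\sum_k(BQ_\alpha)_{i,k}u^\alpha_{k,j}\|_{L^1(\g)}$, and this is precisely why the weight degrades from $d_\alpha$ to $d_\alpha/n_\alpha$. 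Your Step~3 conflates these and does not explain how the factor $n_\alpha$ in (1) is recovered.

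\textbf{The detour.} Your Fej\'er-kernel step and the attendant co-amenability obstacle are avoidable. Once you have $\sup_U\|f_U\|_{M(\g)}<\infty$, the elementary fact that every contraction in $M_{n_\alpha}$ is an average of four unitaries extends the bound to all $B\in\mathrm{Ball}(\ell^\infty(\widehat\g))$ (this is the paper's Proposition~\ref{lem2}, proved by closed graph rather than Baire, but your Step~1 is an acceptable substitute). Now take $B$ to be a \emph{finitely supported} family of normalised Gaussian matrices $(G_{n_\alpha})_{\alpha\in S}$: then $f_B$ is automatically a trigonometric polynomial, hence lies in $L^1(\g)$ with $\|f_B\|_{L^1(\g)}=\|f_B\|_{M(\g)}$, and one can apply Khintchine or cotype~2 directly --- no regularising kernel, no limit $F\uparrow\mathrm{Irr}(\g)$ to control, no co-amenability hypothesis. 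The expected operator norm $\mathbb E\|G_{n_\alpha}\|$ being bounded uniformly in $n_\alpha$ (Marcus--Pisier) controls the left-hand side, and the finite set $S$ is arbitrary.

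In short: replace your Steps~2--3 by the paper's two separate computations (Khintchine for Kac, cotype~2 plus the pointwise estimate of Lemma~\ref{lem3} for general $\g$), and feed them with finitely supported Gaussian --- not Haar-unitary --- coefficients obtained from the extension to $\ell^\infty(\widehat\g)$.
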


Moreover, we will apply the obtained results to a problem determining whether the associated convolution algebra $L^1(\g)$ can be completely isomorphic to a closed subalgebra of $B(H)$, in which $H$ is a Hilbert space (See Section \ref{sec:application}). In this case, we will say that $L^1(\g)$ is completely representable as an operator algebra.

\begin{corollary}
Let $\g=(A,\Delta)$ be a compact quantum group. Then $L^1(\g)$ is completely representable as an operator algebra if and only if $A$ is finite dimensional.
\end{corollary}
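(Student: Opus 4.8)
The plan is to prove the two implications separately.

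\textbf{The direction $\dim A<\infty\ \Rightarrow\ L^1(\g)$ is completely representable.} If $A$ is finite dimensional then $\g$ is a finite quantum group and $L^1(\g)=A^*$ is finite dimensional; the convolution product on $A^*$ (the transpose of $\Delta$) is precisely the product of the dual Hopf $*$-algebra, so as a Banach algebra $L^1(\g)$ is a finite-dimensional $C^*$-algebra $\bigoplus_i M_{k_i}$. It then suffices to recall that any linear map out of a finite-dimensional operator space is automatically completely bounded — a bounded functional on an operator space is completely bounded with the same norm, and a finite-rank map is a finite sum of rank-one maps — so the identification $L^1(\g)\cong\bigoplus_i M_{k_i}$ is a complete isomorphism of algebras, and a $C^*$-algebra is an operator algebra.

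\textbf{The converse.} Assume $L^1(\g)$ is completely isomorphic, as an algebra, to a closed subalgebra $B\subseteq B(H)$. The goal is to force $\mathrm{Irr}(\g)$ to be finite, which is equivalent to $\dim\mathrm{Pol}(\g)<\infty$ and hence to $\dim A<\infty$. The first step is to use the abstract description of operator algebras — after the Blecher--Ruan--Sinclair renorming, the multiplication of $B$ is a complete contraction on the Haagerup tensor square — together with the operator-space structure of $L^1(\g)=L^\infty(\g)_*$ and the averaging over $\mathcal{U}$ (the noncommutative Khintchine inequalities underlying the rest of the paper), to show that for \emph{every} $f\in L^1(\g)$ and every $U=(U_\alpha)_{\alpha}\in\mathcal{U}$ the random Fourier series $f_U$ defines an element of $M(\g)$. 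Granting this, Theorem~\ref{thm3} applies to every $f\in L^1(\g)$ and yields finiteness of
\[
S(f):=\sum_{\alpha\in\mathrm{Irr}(\g)}\frac{d_\alpha}{n_\alpha}\,\mathrm{tr}\!\left(Q_\alpha\widehat f(\alpha)^*\widehat f(\alpha)\right)
\]
for all $f$ (in the Kac case the cleaner quantity $\sum_\alpha n_\alpha\,\mathrm{tr}(\widehat f(\alpha)^*\widehat f(\alpha))=\|f\|_{L^2(\g)}^2$). Since $f\mapsto S(f)^{1/2}$ is a lower semicontinuous seminorm on the Banach space $(L^1(\g),\|\cdot\|_1)$ — each partial sum is a $\|\cdot\|_1$-continuous seminorm because each $\widehat f(\alpha)$ depends continuously on $f$ — and is now everywhere finite, a Baire category argument produces a constant with $S(f)^{1/2}\le C\|f\|_1$ for all $f\in L^1(\g)$.

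The last step turns this a priori estimate into finite-dimensionality. In the Kac case it reads $\|f\|_{L^2(\g)}\le C\|f\|_{L^1(\g)}$; applied to a nonzero projection $p\in L^\infty(\g)$ it gives $h(p)^{1/2}\le C\,h(p)$, i.e.\ $h(p)\ge C^{-2}$, so $L^\infty(\g)$ carries no infinite orthogonal family of nonzero projections and is finite dimensional, whence so are $\mathrm{Pol}(\g)$ and $A$. For a general compact quantum group one argues identically after combining Theorem~\ref{thm3}(2) with the Plancherel identity (Proposition~\ref{prop-easy}) and the inequality $d_\alpha\ge n_\alpha$ to again dominate an $L^2$-type norm of $f$ by $\|f\|_1$, and then invoking the same fact that a faithful normal state bounded below on projections forces finite-dimensionality of $L^\infty(\g)$.

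The main obstacle is the first step of the converse: translating the abstract hypothesis ``$L^1(\g)$ is an operator algebra'' into the concrete statement that the random Fourier series of arbitrary elements of $L^1(\g)$ land in $M(\g)$. This is where the Haagerup-tensor / complete-boundedness picture has to be matched against the explicit form of convolution on Fourier coefficients and against the randomization over $\mathcal{U}$; the non-Kac bookkeeping — the operators $Q_\alpha$ intervening both in the product on $L^1(\g)$ and in the conclusion of Theorem~\ref{thm3} — is a further, essentially technical, complication. Everything downstream of that step is soft functional analysis.
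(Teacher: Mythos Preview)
Your plan leaves the decisive step unproved: you write ``Granting this'' for the claim that complete representability of $L^1(\g)$ forces every random Fourier series $f_U$ of every $f\in L^1(\g)$ to lie in $M(\g)$, and you yourself flag this as the main obstacle. The paper does not attempt anything of the sort. Instead it uses Blecher's criterion to convert the hypothesis into complete boundedness of $\Delta:L^\infty(\g)\to L^\infty(\g)\otimes_{eh}L^\infty(\g)$, then exhibits a concrete family of contractions $T_B\in B(L^2(\g))$ (Lemma~\ref{lem7}) so that evaluating $\Delta(f)$, viewed in $CB^\sigma(B(L^2(\g)))$, at $T_B$ and testing against $1_A\in L^2(\g)$ yields
\[
\Bigl|\sum_{\alpha} n_\alpha\,\mathrm{tr}\bigl(\widehat f(\alpha)Q_\alpha B_\alpha\bigr)\Bigr|\le \|\Delta\|_{cb}\,\|f\|_{L^\infty(\g)}.
\]
Dualizing, this says the map $\Psi:\ell^\infty(\widehat\g)\to M(\g)$ sending $A=(A_\alpha)_\alpha$ to the series with Fourier coefficients $A_\alpha\cdot(\tfrac{n_\alpha}{d_\alpha}\mathrm{Id}_{n_\alpha})$ is bounded; in other words, the \emph{single} sequence $\widehat f(\alpha)=\tfrac{n_\alpha}{d_\alpha}\mathrm{Id}_{n_\alpha}$ satisfies the hypotheses of Proposition~\ref{lem2} and Theorem~\ref{thm-nonKac}. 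Plugging this specific $\widehat f$ into the conclusion of Theorem~\ref{thm-nonKac} gives $\sum_\alpha\tfrac{d_\alpha}{n_\alpha}\mathrm{tr}(Q_\alpha\cdot\tfrac{n_\alpha^2}{d_\alpha^2}\mathrm{Id}_{n_\alpha})=\sum_\alpha n_\alpha<\infty$, and finite-dimensionality is immediate. No Baire category, no projection argument, no universal statement about all $f$.

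There is also a genuine gap in your non-Kac endgame. The quantity $S(f)=\sum_\alpha\tfrac{d_\alpha}{n_\alpha}\mathrm{tr}(Q_\alpha\widehat f(\alpha)^*\widehat f(\alpha))$ is \emph{dominated by}, not a domination of, $\|f\|_{L^2(\g)}^2=\sum_\alpha d_\alpha\,\mathrm{tr}(Q_\alpha\widehat f(\alpha)^*\widehat f(\alpha))$; the inequality $d_\alpha\ge n_\alpha$ points the wrong way for what you need. Hence even with $S(f)^{1/2}\le C\|f\|_1$ in hand, the projection trick fails: for a projection $p$ one only has $S(p)\le h(p)$, and $S(p)\le C^2 h(p)^2$ yields no lower bound on $h(p)$. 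The paper's route avoids this entirely, because applying Theorem~\ref{thm-nonKac} to the one carefully chosen multiplier makes the factor $\tfrac{d_\alpha}{n_\alpha}$ cancel against $|\widehat f(\alpha)|^2=\tfrac{n_\alpha^2}{d_\alpha^2}\mathrm{Id}_{n_\alpha}$ to produce exactly $\sum_\alpha n_\alpha$.
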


\section{Preliminaries}\label{sec:pre}
\subsection{Compact quantum group}\label{subsec:rep}

A compact quantum group is a pair $\g=(A,\Delta)$ where $A$ is a unital $C^*$-algebra and $\Delta:A\rightarrow A\otimes_{min} A$ is a unital $*-$homomorphism satisfying
\begin{enumerate}
\item $(\Delta\otimes \mathrm{id})\circ \Delta=(\mathrm{id} \otimes \Delta)\circ \Delta$ and
\item $\mathrm{span}\left \{ \Delta(a)(b\otimes 1):a,b\in A\right\}$ and $\mathrm{span}\left \{ \Delta(a)(1\otimes b):a,b\in A\right\}$ are dense in $A\otimes_{min}A.$
\end{enumerate}

Let us describe the basic representation theory and {\it the Schur's orthogonality relation} on compact quantum groups. A finite dimensional {\it unitary representation} of $\mathbb{G}$ is $u=(u_{i,j})_{1\leq i,j\leq n}\in M_n(A)$ such that $\Delta(u_{i,j})=\displaystyle \sum_{k=1}^n u_{i,k}\otimes u_{k,j}$ and $\displaystyle (u^*u)_{i,j}=\sum_{k=1}^nu_{k,i}^*u_{k,j}=\delta_{i,j}1_A=\sum_{k=1}^n u_{i,k}u^*_{j,k}=(uu^*)_{i,j}$ for all $1\leq i,j\leq n$. We say that a unitary representation $u\in M_n(A)$ is {\it irreducible} if 
$\left \{X\in M_n:Xu=uX\right\}=\mathbb{C}\cdot \mathrm{Id}_n$ and denote by $\left \{u^{\alpha}=(u^{\alpha}_{i,j})_{1\leq i,j\leq n_{\alpha}}\right\}_{\alpha\in \mathrm{Irr}(\g)}$ a maximal family of mutually inequivalent unitary irreducible representations of $\g$.

Every compact quantum group allows the analogue of the Haar measure, namely the {\it Haar state}, which is the unique state $h$ on $A$ such that $(\mathrm{id}\otimes h)(\Delta(a))=h(a)1_A=(h\otimes \mathrm{id})(\Delta(a))$ for all $a\in A$ and $h(1_A)=1$.

For each $\alpha\in \mathrm{Irr}(\mathbb{G})$, there exists a unique mathrix $Q_{\alpha}\in M_{n_{\alpha}}$ such that $\displaystyle Q_{\alpha}^{\frac{1}{2}}\overline{u^{\alpha}}Q_{\alpha}^{-\frac{1}{2}}$ is unitary and $\mathrm{tr}(Q_{\alpha})=\mathrm{tr}(Q_{\alpha}^{-1})$
where $\overline{u^{\alpha}}:=((u^{\alpha}_{i,j})^*)_{1\leq i,j\leq n_{\alpha}}$. We call $d_{\alpha}=\mathrm{tr}(Q_{\alpha})=\mathrm{tr}(Q_{\alpha}^{-1})$ {\it the quantum dimension} of $\alpha\in \mathrm{Irr}(\g)$.
 For any $\alpha,\beta\in \mathrm{Irr}(\g)$, $1\leq i,j\leq n_{\alpha}$ and $1\leq s,t\leq n_{\beta}$, we have the Schur's orthogonality relation as follows. 
\[h((u^{\beta}_{s,t})^*u^{\alpha}_{i,j})=\frac{\delta_{\alpha,\beta}\delta_{j,t}(Q_{\alpha}^{-1})_{i,s}}{\mathrm{tr}(Q_{\alpha})}~\mathrm{and~}h(u^{\beta}_{s,t}(u^{\alpha}_{i,j})^*)=\frac{\delta_{\alpha,\beta}\delta_{i,s}(Q_{\alpha})_{j,t}}{\mathrm{tr}(Q_{\alpha})}.\]

Moreover, we may assume that the matrices $Q_{\alpha}$ are diagonal \cite{Da10}. Also, we say that $\g$ is of {\it Kac type} if $Q_{\alpha}=\mathrm{Id}_{n_{\alpha}}$ for all $\alpha\in \mathrm{Irr}(\g)$ or equivalently if the Haar state $h$ is tracial.

We define the space of matrix coefficients as
\[\mathrm{Pol}(\g)=\mathrm{span}\left \{u^{\alpha}_{i,j}:\alpha\in \mathrm{Irr}(\mathbb{G}),1\leq i,j\leq n_{\alpha}\right\}\]
and then the Haar state $h$ is faithful on $\mathrm{Pol}(\g)$. We denote by $L^2(\g)$ the closure of $\mathrm{Pol}(\g)$ with respect to the pre-inner product $\langle a,b\rangle_{L^2(\g)}=h(b^*a)$ for all $a,b\in \mathrm{Pol}(\g)$ and denote by $\Lambda:\mathrm{Pol}(\g)\hookrightarrow L^2(\g),~a\mapsto \Lambda(a)$ the natural embedding. Also, we define $C_r(\g)$ as the norm-closure of $\mathrm{Pol}(\g)$ in $B(L^2(\g))$ under the GNS representation $[\iota(a)](\Lambda(x)):=\Lambda(ax)$ for all $a,x\in \mathrm{Pol}(\g)$ and $L^{\infty}(\g)$ as weak $*$-closure of $C_r(\g)$ in $B(L^2(\g))$.

The haar state $h$ extends to $L^{\infty}(\g)$ as a normal faithful state and we will denote it by $h$ again. We consider the predual $L^1(\g)$ of $L^{\infty}(\g)$ and the dual $M(\g)$ of $C_r(\g)$ respectively. Then we have a contractive embedding $L^{\infty}(\g)\hookrightarrow L^1(\g),~x\mapsto h(\cdot x)$ and the isometric formal identity from $L^1(\g)$ into $M(\g)$. Note that $\mathrm{Pol}(\g)$ is dense in $L^1(\g)$.

\subsection{Fourier analysis on compact quantum groups}

It is known that every compact quantum group admits the dual discrete quantum group $\widehat{\g}$ with a natural quantum group structure under {\it the generalized Pontrjagin duality}. Although we will not mention its quantum group structure, we will specifically explain some of non-commutative $\ell^p$-spaces on the discrete quantum group $\widehat{\g}$.

The associated von Neumann algebra of $\widehat{\g}$ is given by
\[\ell^{\infty}(\widehat{\g})=\left \{(X_{\alpha})_{\alpha\in \mathrm{Irr}(\g)}\in \prod_{\alpha\in \mathrm{Irr}(\g)}M_{n_{\alpha}}:\sup_{\alpha\in \mathrm{Irr}(\g)}\left \|X_{\alpha}\right \|<\infty\right\}\]
with the norm $\displaystyle \left \|(X_{\alpha})_{\alpha\in \mathrm{Irr}(\g)} \right \|_{\ell^{\infty}(\widehat{\g})}=\sup_{\alpha\in \mathrm{Irr}(\g)}\left \|X_{\alpha}\right \|_{M_{n_{\alpha}}}$.

We define the Fourier-Stieltjes transform by $\mathcal{F}:M(\g)=C_r(\g)^*\rightarrow \ell^{\infty}(\widehat{\g})$,
\[\mu\mapsto (\widehat{\mu}(\alpha))_{\alpha\in \mathrm{Irr}(\g)}~\mathrm{with~}\widehat{\mu}(\alpha)=(\mu((u^{\alpha}_{j,i})^*))_{1\leq i,j\leq n_{\alpha}},\]
to follow the standard notations of Fourier analysis on compact groups. Then the Fourier expansion of $\mu\in M(\g)$ is given by
\[\mu\sim \sum_{\alpha\in \mathrm{Irr}(\g)}d_{\alpha}\mathrm{tr}(\widehat{\mu}(\alpha)Q_{\alpha}u^{\alpha})=\sum_{\alpha\in \mathrm{Irr}(\g)}\sum_{i,j=1}^{n_{\alpha}}d_{\alpha}(\widehat{\mu}(\alpha)Q_{\alpha})_{i,j}u^{\alpha}_{j,i}.\]

Indeed, if $f\in \mathrm{Pol}(\g)$, then we have
\[f= \sum_{\alpha\in \mathrm{Irr}(\g)}d_{\alpha}\mathrm{tr}(\widehat{f}(\alpha)Q_{\alpha}u^{\alpha}).\]

The associated $\ell^2$ and $\ell^1$ spaces on $\widehat{\g}$ are
\begin{align*}
\ell^2(\widehat{\mathbb{G}})=&\left \{ (X_{\alpha})_{\alpha\in \mathrm{Irr}(\mathbb{G})}\in \ell^{\infty}(\mathbb{\widehat{G}}): \sum_{\alpha\in \mathrm{Irr}(\mathbb{G})}d_{\alpha}\mathrm{tr}(Q_{\alpha}X_{\alpha}^*X_{\alpha})<\infty \right\},\\
\ell^1(\widehat{\mathbb{G}})=&\left \{ (X_{\alpha})_{\alpha\in \mathrm{Irr}(\mathbb{G})}\in \ell^{\infty}(\mathbb{\widehat{G}}): \sum_{\alpha\in \mathrm{Irr}(\mathbb{G})}d_{\alpha}\mathrm{tr}(|X_{\alpha}Q_{\alpha}|)<\infty \right\}
\end{align*}
with the norm structures
\begin{align*}
\left \|(X_{\alpha})_{\alpha\in \mathrm{Irr}(\g)}\right \|_{\ell^2(\widehat{\g})}=&(\sum_{\alpha\in \mathrm{Irr}(\g)}d_{\alpha}\mathrm{tr}(Q_{\alpha}X_{\alpha}^*X_{\alpha}))^{\frac{1}{2}},\\
\left \|(X_{\alpha})_{\alpha\in \mathrm{Irr}(\g)}\right \|_{\ell^1(\widehat{\g})}=&\sum_{\alpha\in \mathrm{Irr}(\g)}d_{\alpha}\mathrm{tr}(|X_{\alpha}Q_{\alpha}|)
\end{align*}
respectively. When we restrict the domain of the Fourier-Stieltjes transform to $L^2(\g)$, we have the isometry $\mathcal{F}:L^2(\g)\rightarrow \ell^2(\widehat{\g})$ \cite{Wa17}, which is called the Plancherel identity.

\subsection{Random Fourier series}

From now on, we call formal series 
\[ \sum_{\alpha\in \mathrm{Irr}(\g)}d_{\alpha}\mathrm{tr}(U_{\alpha}\widehat{f}(\alpha)Q_{\alpha}u^{\alpha})\] 
the random Fourier series of $f\in M(\g)$ for $U=(U_{\alpha})_{\alpha\in \mathrm{Irr}(\g)}\in \displaystyle \mathcal{U}$. The main question of this paper is to find out when all of the random Fourier series are in $M(\g)$ simultaneously. It can be seen that one direction on our question can be solved simply from the Plancherel identity.

\begin{proposition}\label{prop-easy}
If $f\in L^2(\g)$, all of random fourier series $f_U$ are in $L^2(\g)$.
\end{proposition}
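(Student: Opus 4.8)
The plan is to read off the Fourier coefficients of the formal series defining $f_U$ and then apply the Plancherel identity together with the unitarity of each $U_\alpha$. First I would treat the case $f\in\mathrm{Pol}(\g)$: here the prescribed random series $\sum_{\alpha}d_\alpha\mathrm{tr}(U_\alpha\widehat{f}(\alpha)Q_\alpha u^\alpha)$ is a finite linear combination of matrix coefficients, hence an element of $\mathrm{Pol}(\g)$, and by uniqueness of the Fourier expansion (the $u^\alpha_{i,j}$ are linearly independent, being pairwise orthogonal in $L^2(\g)$ by the Schur orthogonality relation) this element, call it $f_U$, satisfies $\widehat{f_U}(\alpha)=U_\alpha\widehat{f}(\alpha)$ for every $\alpha$. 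Thus passing from $f$ to $f_U$ amounts, on the Fourier side, to left-multiplying each block $\widehat{f}(\alpha)$ by the unitary $U_\alpha$.

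Next, since $U_\alpha^*U_\alpha=\mathrm{Id}_{n_\alpha}$ we have $(U_\alpha\widehat{f}(\alpha))^*(U_\alpha\widehat{f}(\alpha))=\widehat{f}(\alpha)^*\widehat{f}(\alpha)$ for every $\alpha$, so the Plancherel isometry $\mathcal{F}\colon L^2(\g)\to\ell^2(\widehat{\g})$ yields
\[\|f_U\|_{L^2(\g)}^2=\sum_{\alpha\in\mathrm{Irr}(\g)}d_\alpha\mathrm{tr}\!\left(Q_\alpha(U_\alpha\widehat{f}(\alpha))^*(U_\alpha\widehat{f}(\alpha))\right)=\sum_{\alpha\in\mathrm{Irr}(\g)}d_\alpha\mathrm{tr}\!\left(Q_\alpha\widehat{f}(\alpha)^*\widehat{f}(\alpha)\right)=\|f\|_{L^2(\g)}^2.\]
Hence $f\mapsto f_U$ is an $L^2(\g)$-isometry on the dense subspace $\mathrm{Pol}(\g)$, and therefore extends uniquely to an isometry $T_U$ on $L^2(\g)$. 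For an arbitrary $f\in L^2(\g)$ one then sets $f_U:=T_Uf\in L^2(\g)$; continuity of $\mathcal{F}$ on $L^2(\g)$ confirms that $\widehat{f_U}(\alpha)=U_\alpha\widehat{f}(\alpha)$, so $f_U$ is indeed represented by the stated random series. Alternatively, one may simply invoke surjectivity of the Plancherel map onto $\ell^2(\widehat{\g})$ and define $f_U=\mathcal{F}^{-1}\big((U_\alpha\widehat{f}(\alpha))_{\alpha\in\mathrm{Irr}(\g)}\big)$.

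There is no genuine obstacle here: the whole point is the cancellation of $U_\alpha$ in $(U_\alpha\widehat{f}(\alpha))^*(U_\alpha\widehat{f}(\alpha))$, and the only item requiring a line of care is the bookkeeping that turns the formal series into an honest element of $L^2(\g)$, which the density argument above supplies. Finally, since the Haar state is faithful one has the contractive inclusion $L^2(\g)\subseteq L^1(\g)$, so in fact $f_U\in L^1(\g)$ as well, as recorded in the introduction.
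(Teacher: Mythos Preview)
Your proof is correct and follows the same approach as the paper: the key step is the Plancherel identity together with the cancellation $U_\alpha^*U_\alpha=\mathrm{Id}_{n_\alpha}$, yielding $\|f_U\|_{L^2(\g)}=\|f\|_{L^2(\g)}$. The paper simply records this one-line computation without the explicit density/extension bookkeeping you supply, so your version is just a more detailed rendering of the same argument.
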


\begin{proof}

\begin{align*}
\left \|f_U\right \|_{L^2(\g)}^2&=\sum_{\alpha\in \mathrm{Irr}(\g)}d_{\alpha}\mathrm{tr}(Q_{\alpha}\widehat{f}(\alpha)^*U_{\alpha}^*U_{\alpha}\widehat{f}(\alpha))\\
&=\sum_{\alpha\in \mathrm{Irr}(\g)}d_{\alpha}\mathrm{tr}(Q_{\alpha}\widehat{f}(\alpha)^*\widehat{f}(\alpha))=\left \|f\right\|_{L^2(\g)}^2.
\end{align*}

\end{proof}

\begin{lemma}\label{lem1}

For $\displaystyle \mu\sim \sum_{\alpha\in \mathrm{Irr}(\g)}d_{\alpha}\mathrm{tr}(\widehat{\mu}(\alpha)Q_{\alpha}u^{\alpha})\in M(\mathbb{G})$
and $f\in \mathrm{Pol}(\mathbb{G})$,
\[\langle \mu,f^*\rangle_{M(\g),C_r(\g)} =\sum_{\alpha\in \mathrm{Irr}(\g)}d_{\alpha}\mathrm{tr}(\widehat{\mu}(\alpha)Q_{\alpha}\widehat{f}(\alpha)^*).\]

\end{lemma}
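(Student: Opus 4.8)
The plan is to reduce the pairing $\langle \mu, f^*\rangle_{M(\g),C_r(\g)}$ to a finite sum by exploiting the fact that $f\in\mathrm{Pol}(\g)$, and then to identify this finite sum with the right-hand side using the definition of the Fourier--Stieltjes transform together with the Schur orthogonality relations. Since $f\in\mathrm{Pol}(\g)$, we may write $f=\sum_{\beta}\sum_{s,t} c^{\beta}_{s,t}\,u^{\beta}_{s,t}$ as a \emph{finite} linear combination of matrix coefficients, so that $f^*=\sum_{\beta}\sum_{s,t}\overline{c^{\beta}_{s,t}}\,(u^{\beta}_{s,t})^*$. By linearity of the functional $\mu$, it then suffices to compute $\mu((u^{\beta}_{s,t})^*)$ for each matrix coefficient, and this is \emph{exactly} the defining formula for the Fourier--Stieltjes transform: $\widehat{\mu}(\beta)=(\mu((u^{\beta}_{j,i})^*))_{1\le i,j\le n_\beta}$, i.e. $\mu((u^{\beta}_{s,t})^*)=\widehat{\mu}(\beta)_{t,s}$.

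First I would compute, from $f=\sum_{\beta}d_{\beta}\mathrm{tr}(\widehat{f}(\beta)Q_{\beta}u^{\beta})=\sum_{\beta}\sum_{s,t}d_{\beta}(\widehat{f}(\beta)Q_{\beta})_{s,t}\,u^{\beta}_{t,s}$, the adjoint $f^*=\sum_{\beta}\sum_{s,t}d_{\beta}\overline{(\widehat{f}(\beta)Q_{\beta})_{s,t}}\,(u^{\beta}_{t,s})^*$. Applying $\mu$ and using $\mu((u^{\beta}_{t,s})^*)=\widehat{\mu}(\beta)_{s,t}$ yields
\[
\langle \mu, f^*\rangle=\sum_{\beta\in\mathrm{Irr}(\g)}\sum_{s,t=1}^{n_\beta} d_{\beta}\,\overline{(\widehat{f}(\beta)Q_{\beta})_{s,t}}\,\widehat{\mu}(\beta)_{s,t}.
\]
Then I would recognize the inner double sum as a trace: $\sum_{s,t}\overline{(\widehat{f}(\beta)Q_{\beta})_{s,t}}\,\widehat{\mu}(\beta)_{s,t}=\mathrm{tr}(\widehat{\mu}(\beta)\,(\widehat{f}(\beta)Q_{\beta})^*)=\mathrm{tr}(\widehat{\mu}(\beta)\,Q_{\beta}^*\widehat{f}(\beta)^*)$. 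Since the matrices $Q_\beta$ may be assumed diagonal with positive entries (as recalled in the preliminaries), $Q_{\beta}^*=Q_{\beta}$, and cyclicity of the trace rearranges this to $d_\beta\,\mathrm{tr}(\widehat{\mu}(\beta)Q_\beta\widehat{f}(\beta)^*)$, giving the claimed identity.

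The one point requiring genuine care — and the main obstacle — is the interchange of $\mu$ with the series defining $f$: this is legitimate here only because $f\in\mathrm{Pol}(\g)$ makes the expansion a \emph{finite} sum, so no convergence or continuity argument for the functional $\mu$ is needed; I would state this explicitly. A secondary subtlety is bookkeeping of indices between the convention $\widehat{\mu}(\alpha)=(\mu((u^{\alpha}_{j,i})^*))_{i,j}$ (note the transposition of indices in the definition) and the matrix-coefficient expansion $f=\sum d_\alpha(\widehat f(\alpha)Q_\alpha)_{i,j}u^\alpha_{j,i}$; tracking these carefully is exactly what produces the matching $\mathrm{tr}(\widehat{\mu}(\alpha)Q_\alpha\widehat f(\alpha)^*)$ rather than its transpose or conjugate. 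No appeal to Schur orthogonality is in fact needed — the identity is purely formal once the Fourier--Stieltjes convention is unwound — though one could alternatively pair $f$ against the series for $\mu$ in $L^2(\g)$ when $\mu\in\mathrm{Pol}(\g)$ and use Schur orthogonality as a consistency check.
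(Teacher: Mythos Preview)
Your proposal is correct and follows essentially the same route as the paper's proof: expand $f\in\mathrm{Pol}(\g)$ as a finite sum of matrix coefficients, take the adjoint, apply $\mu$ entrywise using the definition $\mu((u^{\alpha}_{j,i})^*)=\widehat{\mu}(\alpha)_{i,j}$, and recognise the resulting double sum as the trace $\mathrm{tr}(\widehat{\mu}(\alpha)Q_{\alpha}\widehat{f}(\alpha)^*)$. The only cosmetic difference is that the paper reaches the final form via $\overline{\mathrm{tr}(\widehat{f}(\alpha)Q_{\alpha}\widehat{\mu}(\alpha)^*)}=\mathrm{tr}(\widehat{\mu}(\alpha)Q_{\alpha}\widehat{f}(\alpha)^*)$, whereas you use $Q_{\alpha}^*=Q_{\alpha}$ directly (your invocation of cyclicity is in fact unnecessary, since the expression is already in the desired order).
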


\begin{proof}
For $f=\displaystyle \sum_{\alpha\in \mathrm{Irr}(\g)}\sum_{i,j=1}^{n_{\alpha}}d_{\alpha}(\widehat{f}(\alpha)Q_{\alpha})_{i,j}u^{\alpha}_{j,i}\in \mathrm{Pol}(\g)$,  
\[f^*=\displaystyle \sum_{\alpha\in \mathrm{Irr}(\g)}\sum_{i,j=1}^{n_{\alpha}}d_{\alpha}\overline{(\widehat{f}(\alpha)Q_{\alpha})_{i,j}}(u^{\alpha}_{j,i})^*\]
and
\begin{align*}
\langle\mu,f^*\rangle_{M(\g),C_r(\g)}&=\sum_{\alpha\in \mathrm{Irr}(\g)}\sum_{i,j=1}^{n_{\alpha}}d_{\alpha}\overline{(\widehat{f}(\alpha)Q_{\alpha})_{i,j}}\langle \mu,(u^{\alpha}_{j,i})^*\rangle_{M(\g),C_r(\g)}\\
&=\sum_{\alpha\in \mathrm{Irr}(\g)}\sum_{i,j=1}^{n_{\alpha}}d_{\alpha}\overline{(\widehat{f}(\alpha)Q_{\alpha})_{i,j}}[\widehat{\mu}(\alpha)]_{i,j}\\
&=\sum_{\alpha\in \mathrm{Irr}(\g)}\sum_{i,j=1}^{n_{\alpha}}d_{\alpha}\overline{(\widehat{f}(\alpha)Q_{\alpha})_{i,j}[\widehat{\mu}(\alpha)^*]_{j,i}}\\
&=\sum_{\alpha\in \mathrm{Irr}(\g)}d_{\alpha}\overline{\mathrm{tr}(\widehat{f}(\alpha)Q_{\alpha}\widehat{\mu}(\alpha)^*)}\\
&=\sum_{\alpha\in \mathrm{Irr}(\g)}d_{\alpha}\mathrm{tr}(\widehat{\mu}(\alpha)Q_{\alpha}\widehat{f}(\alpha)^*).
\end{align*}

\end{proof}

\begin{proposition}\label{lem2}
Fix a family of matrices $(X_{\alpha})_{\alpha\in \mathrm{Irr}(\g)}$. Then
\[\mu_U\sim\sum_{\alpha\in \mathrm{Irr}(\g)}d_{\alpha}\mathrm{tr}( U_{\alpha}X_{\alpha}Q_{\alpha}u^{\alpha}) \in M(\g)\]
for all $U=(U_{\alpha})_{\alpha\in \mathrm{Irr}(\g)}\in \displaystyle \mathcal{U}$
if and only if
\[\mu_B\sim \sum_{\alpha\in \mathrm{Irr}(\g)}d_{\alpha}\mathrm{tr}( B_{\alpha}X_{\alpha}Q_{\alpha}u^{\alpha}) \in M(\mathbb{G})\]
for all $B=(B_{\alpha})_{\alpha\in \mathrm{Irr}(\g)}\in \ell^{\infty}(\widehat{\g})$.

In this case, the map 
\[\Phi:\ell^{\infty}(\widehat{\g})\rightarrow M(\g), B=(B_{\alpha})_{\alpha\in \mathrm{Irr}(\g)}\mapsto \mu_{B}\sim \sum_{\alpha\in \mathrm{Irr}(\mathbb{G})}d_{\alpha}\mathrm{tr}( B_{\alpha}X_{\alpha}Q_{\alpha}u^{\alpha}),\]
is automatically a bounded map by the closed graph theorem.

\end{proposition}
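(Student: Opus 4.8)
The plan is to prove the two-directional equivalence first, and then deduce the automatic boundedness from the closed graph theorem. One direction is trivial: if $\mu_B \in M(\g)$ for every $B \in \ell^\infty(\widehat{\g})$, then in particular this holds for every $U \in \mathcal{U}$, since each $U_\alpha \in U(n_\alpha)$ satisfies $\|U_\alpha\| = 1$ and hence $U = (U_\alpha)_\alpha \in \ell^\infty(\widehat{\g})$. So the content is the forward direction: assuming $\mu_U \in M(\g)$ for all $U \in \mathcal{U}$, produce $\mu_B \in M(\g)$ for an arbitrary $B \in \ell^\infty(\widehat{\g})$.

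First I would reduce to the case $\|B\|_{\ell^\infty(\widehat{\g})} \le 1$ by homogeneity; it suffices to handle a contraction $(B_\alpha)_\alpha$ with each $\|B_\alpha\| \le 1$. The key algebraic fact is that the unit ball of $M_{n}$ is the \emph{closed convex hull} of the unitary group $U(n)$ (a consequence of the Russo--Dye theorem, or more elementarily of the singular value decomposition: a contraction is an average of two unitaries after diagonalising). Thus each $B_\alpha$ lies in $\overline{\mathrm{conv}}\, U(n_\alpha)$. The natural strategy is then to realise $\mu_B$ as a suitable average (in an appropriate weak sense) of the measures $\mu_U$. Concretely, I would pair against test functions $f \in \mathrm{Pol}(\g)$: by Lemma \ref{lem1}, $\langle \mu_U, f^* \rangle = \sum_\alpha d_\alpha \mathrm{tr}(U_\alpha X_\alpha Q_\alpha \widehat{f}(\alpha)^*)$, and since $f \in \mathrm{Pol}(\g)$ only finitely many $\alpha$ contribute, the map $U \mapsto \langle \mu_U, f^* \rangle$ depends on only finitely many coordinates $U_\alpha$ and is affine (in fact linear) in each. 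One then defines a linear functional $\Psi$ on $\mathrm{Pol}(\g)$ by $\Psi(f^*) := \sum_\alpha d_\alpha \mathrm{tr}(B_\alpha X_\alpha Q_\alpha \widehat{f}(\alpha)^*)$ and must show it is bounded in the $C_r(\g)$-norm so that it extends to an element $\mu_B \in M(\g) = C_r(\g)^*$ with the right Fourier coefficients.

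To get the norm bound on $\Psi$, here is where I would use the convex-hull description together with a uniform boundedness argument. Consider the family $\{\mu_U : U \in \mathcal{U}\} \subseteq M(\g)$. For each fixed $f \in \mathrm{Pol}(\g)$, the scalars $\langle \mu_U, f^*\rangle$ are bounded as $U$ ranges over $\mathcal{U}$ (continuous function on the compact group $\mathcal{U}$, or just: finitely many coordinates, each on a compact unitary group). So $\{\mu_U\}$ is weak-$*$ bounded on the dense subspace $\mathrm{Pol}(\g)^* \subseteq C_r(\g)$, hence — since $\mathrm{Pol}(\g)$ is norm-dense in $C_r(\g)$ and each $\mu_U$ is already a bona fide bounded functional — the principle of uniform boundedness gives $M := \sup_{U \in \mathcal{U}} \|\mu_U\|_{M(\g)} < \infty$. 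Now for $f \in \mathrm{Pol}(\g)$ and an arbitrary contraction $B$, writing each $B_\alpha$ (for the finitely many relevant $\alpha$) as a convex combination $B_\alpha = \sum_k \lambda_k^{(\alpha)} U_{\alpha,k}$ of unitaries and taking the corresponding product-convex-combination over the finite index set, we obtain $\langle \mu_B, f^*\rangle$ as a convex combination of terms $\langle \mu_U, f^*\rangle$, whence $|\Psi(f^*)| = |\langle \mu_B, f^*\rangle| \le M \|f^*\|_{C_r(\g)}$. Since $f \mapsto f^*$ is an isometric bijection of $\mathrm{Pol}(\g)$ and $\mathrm{Pol}(\g)$ is dense in $C_r(\g)$, $\Psi$ extends to $\mu_B \in M(\g)$, and by construction $\langle \mu_B, (u^\alpha_{j,i})^*\rangle$ gives exactly the Fourier coefficients $d_\alpha B_\alpha X_\alpha Q_\alpha$ read off as in Lemma \ref{lem1}, i.e. $\mu_B \sim \sum_\alpha d_\alpha \mathrm{tr}(B_\alpha X_\alpha Q_\alpha u^\alpha)$.

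Finally, for the automatic-boundedness claim: the equivalence just proved shows $\Phi: \ell^\infty(\widehat{\g}) \to M(\g)$ is well-defined and linear. To apply the closed graph theorem, suppose $B^{(n)} \to B$ in $\ell^\infty(\widehat{\g})$ and $\mu_{B^{(n)}} \to \nu$ in $M(\g)$. For each fixed $f \in \mathrm{Pol}(\g)$, only finitely many $\alpha$ enter $\langle \mu_{B^{(n)}}, f^*\rangle = \sum_\alpha d_\alpha \mathrm{tr}(B^{(n)}_\alpha X_\alpha Q_\alpha \widehat{f}(\alpha)^*)$, and coordinatewise convergence $B^{(n)}_\alpha \to B_\alpha$ (which follows from $\ell^\infty$-convergence) forces $\langle \mu_{B^{(n)}}, f^*\rangle \to \langle \mu_B, f^*\rangle$. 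On the other hand $\langle \mu_{B^{(n)}}, f^*\rangle \to \langle \nu, f^*\rangle$ by norm convergence in $M(\g)$. Hence $\nu$ and $\mu_B$ agree on the dense subspace $\mathrm{Pol}(\g)^*$ of $C_r(\g)$, so $\nu = \mu_B = \Phi(B)$; the graph of $\Phi$ is closed and $\Phi$ is bounded. I expect the main obstacle to be the norm estimate in the third paragraph — specifically, making the passage from a contraction to an average of unitaries interact cleanly with the infinite product $\mathcal{U}$ and the infinite Fourier expansion, which is precisely why restricting to $f \in \mathrm{Pol}(\g)$ (finitely many nonzero Fourier coefficients) at the start is essential, reducing everything to a finite convex-combination argument plus a uniform boundedness principle.
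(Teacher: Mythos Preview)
Your overall strategy is sound and the closed-graph argument at the end is fine, but there is a genuine gap in the uniform boundedness step. You claim that pointwise boundedness of $\{\mu_U : U\in\mathcal{U}\}$ on the dense subspace $\mathrm{Pol}(\g)\subseteq C_r(\g)$, together with each $\mu_U$ being a bounded functional, already gives $\sup_U\|\mu_U\|_{M(\g)}<\infty$ by the uniform boundedness principle. That inference is false in general: pointwise boundedness on a dense set does \emph{not} suffice. For instance, on $\ell^2$ the functionals $T_n(x)=n\,x_n$ are individually bounded and satisfy $\sup_n|T_n(d)|<\infty$ for every finitely supported $d$, yet $\|T_n\|=n\to\infty$. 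Your subsequent estimate $|\Psi(f^*)|\le M\|f^*\|$ collapses without $M$, because the unitaries appearing in the convex decomposition depend on $f$.

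The fix is already in your hands: you note that a contraction in $M_n$ is an average of two unitaries via the singular value decomposition. The crucial point you did not exploit is that the coefficients $(\tfrac12,\tfrac12)$ are the \emph{same for every} $\alpha$. Writing $B_\alpha=\tfrac12(v^1_\alpha+v^2_\alpha)$ for all $\alpha$ simultaneously and setting $V_j=(v^j_\alpha)_\alpha\in\mathcal{U}$, one gets $\langle\mu_B,f^*\rangle=\tfrac12\langle\mu_{V_1}+\mu_{V_2},f^*\rangle$ for every $f\in\mathrm{Pol}(\g)$, hence $\mu_B=\tfrac12(\mu_{V_1}+\mu_{V_2})\in M(\g)$ directly---no uniform bound on $\|\mu_U\|$ is needed at all. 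This is exactly the paper's route (the paper uses four unitaries via the real/imaginary splitting $B_\alpha=h_1+ih_2$ followed by $h_j=\tfrac12\big((h_j+i\sqrt{1-|h_j|^2})+(h_j-i\sqrt{1-|h_j|^2})\big)$, but the idea is identical). Alternatively, one \emph{can} rescue your uniform-boundedness claim by a Baire category argument on the compact group $\mathcal{U}$ (the sets $\{U:\|\mu_U\|\le n\}$ are closed since $\|\mu_U\|$ is lower semicontinuous in $U$, one has nonempty interior, and a basic open set in the product topology constrains only finitely many coordinates), but this is both more work and not what you wrote.
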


\begin{proof}
One direction is trivial. On the other direction, pick $B=(B_{\alpha})_{\alpha\in \mathrm{Irr}(\g)}\in \mathrm{Ball}(\ell^{\infty}(\widehat{\g}))$ and take an arbitrary $f\in \mathrm{Pol}(\mathbb{G})$. Then
\[\langle \mu_B,f^*\rangle_{M(\g),C_r(\g)}=\sum_{\alpha\in \mathrm{Irr}(\g)}d_{\alpha}\mathrm{tr}(B_{\alpha}X_{\alpha}Q_{\alpha}\widehat{f}(\alpha)^*).\]

For each $\alpha\in \mathrm{Irr}(\g)$, there exist unitaries $v_{\alpha}^j$ with $j=1,\cdots,4$ such that \[X_{\alpha}=\displaystyle \frac{\sum_{j=1}^4 v_{\alpha}^j}{2}\]
by considering the following decomposition
\[ X_{\alpha}=\frac{X_{\alpha}^*+X_{\alpha}}{2}+i \frac{X_{\alpha}-X_{\alpha}^*}{2i}=:h_1+i h_2\]
and
\[h_j= (\frac{h_j +i\sqrt{1-\left |h_j\right |^2}}{2}+\frac{ h_j -i\sqrt{1-\left |h_j\right |^2}}{2})\mathrm{~for~}j=1,2.\]

We define $V_j=(v_{\alpha}^j)_{\alpha\in \mathrm{Irr}(\g)}\in \displaystyle \mathcal{U}$ for $j=1,\cdots, 4$ and then
\[\langle \mu_B,f^*\rangle_{M(\g),C_r(\g)}=\frac{1}{2}\sum_{j=1}^4\sum_{\alpha\in \mathrm{Irr}(\g)}d_{\alpha}\mathrm{tr}(v^j_{\alpha}A_{\alpha}Q_{\alpha}\widehat{f}(\alpha)^*)=\frac{1}{2}\sum_{j=1}^4\langle \mu_{V_j},f^*\rangle_{M(\g),C_r(\g)}.\]

Since $\mathrm{Pol}(\mathbb{G})$ is dense in $C_r(\mathbb{G})$, we get the conclusion
\[\mu_B=\frac{\sum_{j=1}^4 \mu_{V_j}}{2}\in M(\g).\]
\end{proof}

\subsection{Vector valued probabilistic methods}

In this subsection, we gather some probabilistic techniques, namely non-commutative Khintchine inequality and cotype 2 condition, to be used in this study.

First of all, let us write down the non-commutative Khintchine inequality in the context of the present  compact quantum group setting. For more details, see \cite{Pi12}.

For a sequence $(x_j)_{j=1}^n\subseteq L^1(\g)$, we set
\[\vvvert (x_j)_{j=1}^n \vvvert_1 =\sup \left \{ \sum \langle x_j,y_j\rangle_{L^1(\g),L^{\infty}(\g)}:y_j\in L^{\infty}(\g),~\max \left \{ \left \|(y_j)_{j=1}^n\right\|_C,\left \|(y_j)_{j=1}^n \right\|_R \right\}\leq 1 \right\}\]
where $\left \|(y_j)_{j=1}^n\right\|_C=\displaystyle \left \|(\sum_{j=1}^n y_j^*y_j )^{\frac{1}{2}}\right\|_{L^{\infty}(\g)}$ and $\left \| (y_j)_{j=1}^n\right\|_R=\displaystyle \left \|(\sum_{j=1}^n y_jy_j^*)^{\frac{1}{2}}\right\|_{L^{\infty}(\g)}$.

Let $(g_j)_{j=1}^n$ be an independent and identically distributed(in short, i.i.d.) sequece of real valued Gaussian random variables with mean zero and variance 1. Then a part of the Khintchine inequality for non-commutative $L^1$-spaces is as follows:

\begin{theorem}[Theorem 9.1, \cite{Pi12}]\label{thm-Khintchine}
If $\g$ is of Kac type, there exists a universal constant $C>0$ such that
\[\frac{1}{C}\vvvert (x_j)_{j=1}^n \vvvert_1 \leq \int \left \|\sum_{j=1}^n g_j(w) x_j \right \|d\mathbb{P}(w) \leq \vvvert (x_j)_{j=1}^n \vvvert_1\]
for any finite set $(x_j)_{j=1}^n\subseteq L^1(\g)$.
\end{theorem}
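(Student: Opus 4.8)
The plan is to establish the two inequalities separately, treating the right-hand bound as elementary and the left-hand bound as the substantial one. Throughout I use that, since $\g$ is of Kac type, $h$ is a normal faithful \emph{tracial} state, so that $(L^{\infty}(\g),h)$ is a finite von Neumann algebra and $L^1(\g)$ is the associated tracial non-commutative $L^1$-space, with pairing $\langle x,y\rangle=h(xy)=h(yx)$. I first record the structural fact behind the definition of $\vvvert\cdot\vvvert_1$: the constraint $\max\{\norm{(y_j)}_C,\norm{(y_j)}_R\}\le1$ describes the unit ball of the intersection of the column space $C_n^{\infty}$ and the row space $R_n^{\infty}$ in $L^{\infty}(\g)$, whose dual under the trace pairing is the \emph{sum} $C_n^1+R_n^1$ with the infimal-decomposition norm, so that
\[\vvvert (x_j)\vvvert_1=\inf\Big\{\norm{(\textstyle\sum_j a_j^*a_j)^{\frac12}}_{L^1(\g)}+\norm{(\textstyle\sum_j b_jb_j^*)^{\frac12}}_{L^1(\g)}:x_j=a_j+b_j\Big\}.\]
I also note that the Gaussians $(g_j)$ may be exchanged for a Rademacher sequence up to universal constants (central limit theorem one way, contraction principle the other), so the statement is the $p=1$ non-commutative Khintchine inequality in decomposition form.

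For the right-hand (upper) inequality I would argue by operator convexity. Fix a decomposition $x_j=a_j+b_j$ and set $A=\sum_j g_j a_j$, $B=\sum_j g_j b_j$, so that pointwise in $w$ the triangle inequality gives $\norm{\sum_j g_j x_j}_{L^1(\g)}\le\norm{A}_{L^1(\g)}+\norm{B}_{L^1(\g)}$. Since $h$ is tracial, $\norm{A}_{L^1(\g)}=h\big((A^*A)^{\frac12}\big)$ with $A^*A=\sum_{j,k}g_jg_k\,a_j^*a_k$. Because $t\mapsto t^{\frac12}$ is operator concave, the operator Jensen inequality applied to the average $\E_w$ yields $\E_w\big[(A^*A)^{\frac12}\big]\le\big(\E_w[A^*A]\big)^{\frac12}=(\sum_j a_j^*a_j)^{\frac12}$ as positive operators, using $\E_w[g_jg_k]=\delta_{j,k}$; applying the order-preserving functional $h$ together with Fubini gives $\E_w\norm{A}_{L^1(\g)}\le\norm{(\sum_j a_j^*a_j)^{\frac12}}_{L^1(\g)}$. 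The symmetric computation with $(BB^*)^{\frac12}$ bounds the row term, and taking the infimum over decompositions produces exactly $\vvvert(x_j)\vvvert_1$, with constant $1$.

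The left-hand (lower) inequality is where the real work lies and is the step I expect to be the main obstacle. By the duality identification above it suffices, for given $(x_j)$, to bound $\sum_j h(x_jy_j)$ over test families with $\max\{\norm{(y_j)}_C,\norm{(y_j)}_R\}\le1$ by a constant times $\E_w\norm{\sum_j g_j x_j}_{L^1(\g)}$. The tempting route, writing $\sum_j h(x_jy_j)=\E_w\,h\big((\sum_k g_k x_k)(\sum_j g_j y_j)\big)$ and applying trace-duality pointwise, fails because $\norm{\sum_j g_j(w)y_j}_{L^{\infty}(\g)}$ is unbounded in $w$: there is no bounded Khintchine inequality in the operator norm to dualize against, which is precisely the non-commutative difficulty. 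I would instead follow Lust-Piquard--Pisier and avoid the endpoint, proving the decomposition inequality first for $1<p\le2$ (dual to the maximum-form inequality for $2\le p<\infty$, itself accessible through the $L^p$-boundedness of the triangular projection and complex interpolation with the trivial $L^2$ identity from the Plancherel theorem), and then transferring to $p=1$ by the extrapolation argument of \cite{LuPi91} that preserves the infimal row-plus-column structure. The delicate point is controlling the constant uniformly as $p\downarrow1$, and it is here that \cite{Lu86,LuPi91} are essential.

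Finally, since Kac type reduces the whole question to a tracial von Neumann algebra, the cleanest honest route is simply to invoke the general $L^1$ non-commutative Khintchine inequality of Lust-Piquard--Pisier for $(L^{\infty}(\g),h)$; the sketch above indicates how one would reconstruct it in the present setting.
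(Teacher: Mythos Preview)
The paper does not prove this theorem: it is quoted verbatim from \cite{Pi12} (with roots in \cite{Lu86,LuPi91}) as background in the preliminaries, and is used as a black box in the proof of Theorem~\ref{thm-Kac}. There is therefore nothing to compare your argument against in the paper itself.

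That said, your sketch is a faithful outline of the standard proof. The duality identification of $\vvvert\cdot\vvvert_1$ with the infimal $R+C$ norm on $L^1$ of a tracial pair is correct and isometric, and your upper-bound argument via operator concavity of $t\mapsto t^{1/2}$ and Jensen applied to the classical expectation $\E_w$ is clean and yields constant $1$. You are also right that the lower bound is the substantive half and that a direct dualization fails for exactly the reason you give; the extrapolation route through $1<p\le2$ you describe is precisely the Lust-Piquard--Pisier strategy. Your closing remark is the honest summary: once Kac type puts you in a finite tracial von Neumann algebra, the statement \emph{is} the tracial non-commutative Khintchine inequality, and the paper simply invokes it.
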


Also, we will make use of a cotype study on non-commutative $L^1$-spaces. In general, we say that a Banach space is {\it of cotype 2} if there is a constant $K>0$ such that for all finite sets $(x_j)_{j=1}^n \subseteq B$ we have 
\[\int \left \| \sum_{j=1}^n \epsilon_j(w) x_j  \right \|_B d\mathbb{P}(w)\geq K (\sum_{j=1}^n \left \|x_j\right \|^2)^{\frac{1}{2}},\]
where $(\epsilon_j)_j$ is a family of i.i.d. Rademacher variables. It is known that every predual of von Neumann algebra is of cotype 2 \cite{To74} and it is possible to replace the Rademacher varialbes $\epsilon_j$ to the Gaussian variables $g_j$ \cite{LeTa13}.

\begin{theorem}\label{thm-cotype}
There exists a universal constant $K>0$ such that
\[\int \left \|\sum_{j=1}^n g_j(w)x_j \right \|_{L^1(\g)}d\mathbb{P}(w) \geq K(\sum_{j=1}^n \left \|x_j\right \|_{L^1(\g)}^2)^{\frac{1}{2}}\] 
for any finite sets $(x_j)_{j=1}^n\subseteq L^1(\g)$.
\end{theorem}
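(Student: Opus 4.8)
This statement is essentially the known fact that the predual of a von Neumann algebra is of cotype $2$, together with the passage from Rademacher to Gaussian averages, so the proof I have in mind is short and proceeds in three small steps. The first step is a reduction to the Rademacher version: by the construction recalled in Section \ref{subsec:rep}, $L^1(\g)$ is by definition the predual of the von Neumann algebra $L^{\infty}(\g)\subseteq B(L^2(\g))$, and by the theorem of Tomczak--Jaegermann \cite{To74} the predual of \emph{any} von Neumann algebra is of cotype $2$ in the Rademacher sense, with a constant that does not depend on the algebra. That is, there is a universal constant $K'>0$ with
\[
\int\Bigl\|\sum_{j=1}^n\epsilon_j(w)x_j\Bigr\|_{L^1(\g)}\,d\mathbb{P}(w)\ \geq\ K'\Bigl(\sum_{j=1}^n\|x_j\|_{L^1(\g)}^2\Bigr)^{\frac12}
\]
for every finite family $(x_j)_{j=1}^n\subseteq L^1(\g)$, where $(\epsilon_j)_j$ is an i.i.d.\ sequence of Rademacher variables.

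The second step is the comparison of Gaussian and Rademacher averages, which is valid in an arbitrary Banach space $B$: for any finite family $(x_j)_{j=1}^n\subseteq B$ one has
\[
\int\Bigl\|\sum_{j=1}^n g_j(w)x_j\Bigr\|_B\,d\mathbb{P}(w)\ \geq\ \sqrt{2/\pi}\,\int\Bigl\|\sum_{j=1}^n\epsilon_j(w)x_j\Bigr\|_B\,d\mathbb{P}(w).
\]
I would prove this by the standard symmetrization argument: write each real Gaussian as $g_j=\epsilon_j|g_j|$ with $\epsilon_j=\mathrm{sgn}(g_j)$ a Rademacher variable independent of $(|g_k|)_k$, then apply Fubini together with Jensen's inequality (convexity of $\|\cdot\|_B$) in the $|g_j|$-variables, using $\mathbb{E}|g_1|=\sqrt{2/\pi}$. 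This is exactly the Rademacher-to-Gaussian replacement recorded in \cite{LeTa13}.

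Chaining the two displays with $B=L^1(\g)$ then yields the assertion with the universal constant $K=\sqrt{2/\pi}\,K'$. There is essentially no obstacle here; the only point that requires genuine attention is that the constant $K'$ produced by Tomczak--Jaegermann's theorem is truly independent of the von Neumann algebra, hence of $\g$, which is precisely what \cite{To74} provides.
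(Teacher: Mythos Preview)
Your proposal is correct and follows exactly the route the paper indicates: the paper does not give a self-contained proof but simply records, just before the statement, that preduals of von Neumann algebras are of cotype~$2$ by \cite{To74} and that the Rademacher variables may be replaced by Gaussians via \cite{LeTa13}. Your write-up merely supplies the details of these two citations (including the explicit symmetrization argument and the constant $\sqrt{2/\pi}$), so there is nothing to compare.
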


\section{The main results}

\subsection{Affirmative answer on Kac cases}
In this subsection, we will make use of random matrices $G_n=(\displaystyle \frac{1}{\sqrt{n}}g^n_{i,j})_{i,j=1}^n\in M_n$ where $(g_{i,j}^n)_{i,j=1}^n$ is a family of independent real valued Gaussian random variables with mean zero and variance $1$.

\begin{lemma}[Proposition 1.5, Chapter 5, \cite{MaPi81}]\label{lem-gaussian}

There exist universal constants $C_1,C_2>0$, which are independent of $n$, such that
\[C_1\leq \int \left \|G_n (w) \right\|d\mathbb{P}(w) \leq C_2,\]
where $\left \|\cdot \right\|$ is the operator norm of matrices.
\end{lemma}

\begin{theorem}\label{thm-Kac}
Let $\g$ be a compact quantum group of Kac type and suppose that all random fourier series $f_U\displaystyle \sim \sum_{\alpha\in \mathrm{Irr}(\g)}n_{\alpha}\mathrm{tr}(U_{\alpha}\widehat{f}(\alpha)u^{\alpha})$ of $f\in M(\g)$ are in $M(\g)$ for all $U=(U_{\alpha})_{\alpha\in \mathrm{Irr}(\g)}\in \displaystyle \mathcal{U}$. Then we have
\[\sum_{\alpha\in \mathrm{Irr}(\g)}n_{\alpha}\mathrm{tr}(\widehat{f}(\alpha)^*\widehat{f}(\alpha))<\infty.\]
\end{theorem}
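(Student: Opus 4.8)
The strategy is to apply the Khintchine inequality (Theorem~\ref{thm-Khintchine}) together with the cotype~2 estimate (Theorem~\ref{thm-cotype}) to a carefully chosen finite family of translates of $f$, using the Gaussian random matrices $G_n$ from Lemma~\ref{lem-gaussian} to interpolate between the unitary-indexed family $\U$ and the full family $\ell^{\infty}(\widehat{\g})$ allowed by Proposition~\ref{lem2}. First I would fix a finite subset $F\subseteq \mathrm{Irr}(\g)$ and, for each $\alpha\in F$, consider the random coefficient matrix $G_{n_\alpha}(w)$; by Proposition~\ref{lem2}, since $f_U\in M(\g)$ for all $U\in\U$, the map $\Phi\colon \ell^{\infty}(\widehat{\g})\to M(\g)$, $B\mapsto \mu_B\sim\sum_\alpha n_\alpha\mathrm{tr}(B_\alpha\widehat{f}(\alpha)u^\alpha)$, is bounded, say with norm $M$. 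Applying $\Phi$ to the (deterministic-in-$\alpha$, random-in-$w$) family $B(w)=(G_{n_\alpha}(w))_{\alpha\in F}$ (extended by $0$ off $F$) gives elements $g_F(w):=\Phi(B(w))\in M(\g)$ with $\|g_F(w)\|_{M(\g)}\le M\sup_{\alpha\in F}\|G_{n_\alpha}(w)\|$, so by Lemma~\ref{lem-gaussian} and a union bound / direct integration over the finitely many independent blocks, $\int \|g_F(w)\|_{M(\g)}\,d\mathbb{P}(w)\le M\cdot C_2'$ for a constant $C_2'$ independent of $F$ (here one must be slightly careful: the supremum of finitely many $\|G_{n_\alpha}\|$ is controlled on average, e.g.\ because each has exponential tails, giving a bound growing at most logarithmically in $|F|$ — I would instead sum one block at a time, see below).

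The cleaner route avoids the supremum entirely: treat the entries $g^{n_\alpha}_{i,j}(w)/\sqrt{n_\alpha}$ across all $\alpha\in F$ and all $i,j$ as a single Gaussian family $(g_k)_{k}$, and write $g_F(w)=\sum_k g_k(w)\, x_k$ where each $x_k\in M(\g)$ is (up to the scalar $n_\alpha/\sqrt{n_\alpha}=\sqrt{n_\alpha}$) a single matrix-coefficient combination $\sqrt{n_\alpha}\,(\widehat{f}(\alpha)u^\alpha)_{j,i}$-type term; these lie in $\mathrm{Pol}(\g)\subseteq L^1(\g)$, so Theorem~\ref{thm-cotype} applies and yields
\[
\int \Bigl\|\sum_k g_k(w)x_k\Bigr\|_{L^1(\g)}\,d\mathbb{P}(w)\ \ge\ K\Bigl(\sum_k \|x_k\|_{L^1(\g)}^2\Bigr)^{1/2}.
\]
Meanwhile the Khintchine upper bound (the right inequality in Theorem~\ref{thm-Khintchine}, valid since $\g$ is Kac) gives $\int \|\sum_k g_k(w)x_k\|\,d\mathbb{P}\le \vvvert (x_k)_k\vvvert_1$, and I would combine this with the fact that $\sum_k g_k x_k$ is, for each $w$, exactly a random Fourier series $f_{U(w)}$ rescaled — no, more precisely it equals $\Phi$ applied to the block-diagonal Gaussian matrix, hence has $M(\g)$-norm at most $M\sup_\alpha\|G_{n_\alpha}(w)\|$; integrating and using Lemma~\ref{lem-gaussian} blockwise bounds $\int\|\sum_k g_kx_k\|_{M(\g)}\,d\mathbb{P}$ by $M C_2$ \emph{uniformly in $F$}. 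Since $\|\cdot\|_{L^1(\g)}\le\|\cdot\|_{M(\g)}$ via the formal identity, the left-hand side above is $\le M C_2$ as well.

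It remains to identify the resulting lower bound $\sum_k\|x_k\|_{L^1(\g)}^2$ with (a constant times) the desired sum $\sum_{\alpha\in F} n_\alpha\mathrm{tr}(\widehat{f}(\alpha)^*\widehat{f}(\alpha))$. This is the computational heart: each $x_k$ corresponds to a rescaled single entry of the matrix $\widehat{f}(\alpha)u^\alpha$, and one needs a good lower bound for $\|u^\alpha_{i,j}\|_{L^1(\g)}$ and more generally for $L^1$-norms of fixed linear combinations within a single coefficient block. In the Kac case the Schur orthogonality relations simplify (all $Q_\alpha=\mathrm{Id}$), and $\|\cdot\|_{L^2(\g)}\le\|\cdot\|_{L^\infty(\g)}$ with the $L^2$-norm of the relevant combinations computable exactly via Plancherel; since for any $x$ one has $\|x\|_{L^1(\g)}\ge \|x\|_{L^2(\g)}^2/\|x\|_{L^\infty(\g)}$ — or better, one should instead \emph{not} estimate entry-by-entry but directly lower-bound $\vvvert(x_k)_k\vvvert_1$ or use the cotype inequality on the matrix-level family $(\sqrt{n_\alpha}\,\text{(block }\alpha\text{)})_{\alpha}$. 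I expect the main obstacle to be precisely this last identification: getting a lower bound on the $L^1(\g)$-norms that is \emph{sharp enough} (i.e.\ with the correct power of $n_\alpha$ and $d_\alpha=n_\alpha$) so that the telescoped inequality $K^2\bigl(\text{const}\cdot\sum_{\alpha\in F}n_\alpha\mathrm{tr}(\widehat{f}(\alpha)^*\widehat{f}(\alpha))\bigr)\le (MC_2)^2$ holds with a constant independent of the finite set $F$; once that is in place, letting $F\uparrow\mathrm{Irr}(\g)$ gives $\sum_{\alpha}n_\alpha\mathrm{tr}(\widehat{f}(\alpha)^*\widehat{f}(\alpha))<\infty$ by monotone convergence. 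The role of the random matrices $G_n$ (rather than a single random unitary) is exactly to make the Gaussian-sum appearing in cotype match, after normalization, the $L^1$-mass of a full coefficient block $\widehat{f}(\alpha)$ rather than just one rank-one piece, which is what produces the factor $n_\alpha\mathrm{tr}(\widehat{f}(\alpha)^*\widehat{f}(\alpha))$ on the nose.
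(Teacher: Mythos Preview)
Your proposal has a genuine gap at the step you yourself flag as the ``computational heart''. The cotype~2 lower bound (Theorem~\ref{thm-cotype}) applied to the family $x^{\alpha}_{i,j}=\sqrt{n_\alpha}\,(\widehat f(\alpha)u^\alpha)_{j,i}$ only yields $\sum_\alpha \mathrm{tr}(\widehat f(\alpha)^*\widehat f(\alpha))<\infty$, not the claimed $\sum_\alpha n_\alpha\,\mathrm{tr}(\widehat f(\alpha)^*\widehat f(\alpha))<\infty$. Indeed the sharpest available pointwise bound is $\bigl\|(\widehat f(\alpha)u^\alpha)_{j,i}\bigr\|_{L^1(\g)}\gtrsim n_\alpha^{-1}\bigl(\sum_k|\widehat f(\alpha)_{j,k}|^2\bigr)^{1/2}$ (this is Lemma~\ref{lem3}(2) with $Q_\alpha=\mathrm{Id}$, and it cannot be improved in general), and squaring and summing over $i,j$ loses exactly one power of $n_\alpha$. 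What you have outlined is in fact the proof of Theorem~\ref{thm-nonKac}, whose conclusion specialized to Kac type reads $\sum_\alpha (d_\alpha/n_\alpha)\,\mathrm{tr}(\widehat f(\alpha)^*\widehat f(\alpha))=\sum_\alpha \mathrm{tr}(\widehat f(\alpha)^*\widehat f(\alpha))<\infty$.

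The paper recovers the missing factor by using the \emph{lower} inequality in the noncommutative Khintchine inequality (Theorem~\ref{thm-Khintchine}) in place of cotype~2; this is precisely where the Kac hypothesis is used. The norm $\vvvert\,\cdot\,\vvvert_1$ is tested against families $(y^\alpha_{i,j})$ constrained by their column and row square-sums in $L^\infty(\g)$, not by their individual $L^\infty$ norms. Choosing $y^\alpha_{i,j}=\sqrt{n_\alpha}\,[(A(\alpha)u^\alpha)_{j,i}]^*$, the unitarity of $u^\alpha$ and of $\overline{u^\alpha}$ (the latter is where Kac enters a second time) makes both square-sums collapse to the \emph{scalar} $\bigl(\sum_\alpha n_\alpha\,\mathrm{tr}(A(\alpha)^*A(\alpha))\bigr)\cdot 1_A$; the duality pairing then evaluates via Schur orthogonality to $\sum_\alpha n_\alpha\,\mathrm{tr}(\widehat f(\alpha)A(\alpha)^*)$, and taking the supremum over $A$ with $\sum_\alpha n_\alpha\,\mathrm{tr}(A(\alpha)^*A(\alpha))\le 1$ gives exactly $\bigl(\sum_\alpha n_\alpha\,\mathrm{tr}(\widehat f(\alpha)^*\widehat f(\alpha))\bigr)^{1/2}$. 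You mention ``directly lower-bound $\vvvert(x_k)_k\vvvert_1$'' only in passing, but this is not an alternative route---it is the entire argument, and it genuinely requires the operator-space Khintchine inequality rather than the Banach-space cotype estimate.
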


\begin{proof}
By Proposition \ref{lem2}, there exists a universal constant $K>0$ such that 
\[K \left \|B\right\|_{\ell^{\infty}(\widehat{\g})}\geq \left \| f_B\sim \sum_{\alpha\in \mathrm{Irr}(\g)}n_{\alpha}\mathrm{tr}(B_{\alpha}\widehat{f}(\alpha)u^{\alpha})\right \|_{M(\g)}\]
for any $B=(B_{\alpha})_{\alpha\in \mathrm{Irr}(\g)}\in \ell^{\infty}(\widehat{\g})$. Now we fix a finite subset $S\subseteq \mathrm{Irr}(\g)$ and naturally takes into account a family of random matrices $(G_{n_{\alpha}})_{\alpha\in S}=\displaystyle ((\frac{1}{\sqrt{n_{\alpha}}}g^{\alpha}_{i,j})_{i,j=1}^{n_{\alpha}})_{\alpha\in S}$ in $\ell^{\infty}(\widehat{\g})$. Then from the estimate above and Theorem \ref{thm-Khintchine}, we have

\begin{align}
\label{Khintchine}K \int \sup_{\alpha\in S}\left \|G_{n_{\alpha}}\right\| d\mathbb{P}(w) &\geq \int \left \|\sum_{\alpha\in S}\sum_{i,j=1}^{n_{\alpha}} \sqrt{n_{\alpha}}g^{\alpha}_{i,j}(w)(\widehat{f}(\alpha)u^{\alpha})_{j,i}  \right\|_{L^1(\g)}d\mathbb{P}(w)\\
\notag&\geq \frac{1}{C} \vvvert (\sqrt{n_{\alpha}}(\widehat{f}(\alpha)u^{\alpha})_{j,i})_{\alpha,i,j} \vvvert_1\\
\notag&=\frac{1}{C}\sup_{(y^{\alpha}_{i,j})_{\alpha,i,j}} \sum \langle x^{\alpha}_{i,j},y^{\alpha}_{i,j}\rangle_{L^1(\g),L^{\infty}(\g)}
\end{align}
where $x^{\alpha}_{i,j}=\sqrt{n_{\alpha}}(\widehat{f}(\alpha)u^{\alpha})_{j,i}$ and the supremum runs over all $(y^{\alpha}_{i,j})_{\alpha,i,j}\subseteq L^{\infty}(\g)$ such that 
\[\max \left \{ \left \| (\sum_{\alpha\in S}\sum_{i,j=1}^{n_{\alpha}} (y^{\alpha}_{i,j})^*y^{\alpha}_{i,j})^{\frac{1}{2}}\right \|_{L^{\infty}(\g)},\left \| (\sum_{\alpha\in S}\sum_{i,j=1}^{n_{\alpha}} y^{\alpha}_{i,j} (y^{\alpha}_{i,j})^* )^{\frac{1}{2}}\right \|_{L^{\infty}(\g)}\right\}\leq 1. \]

For arbitrary $(A(\alpha))_{\alpha\in S}$ such that $\displaystyle \sum_{\alpha\in S} n_{\alpha}\mathrm{tr}(A(\alpha)^*A(\alpha))\leq 1$, we can find that the family $(y^{\alpha}_{i,j})=(\sqrt{n_{\alpha}}[(A(\alpha)u^{\alpha})_{j,i}]^*)$ satisfies the condition above. Indeed,
\begin{align*}
\sum_{\alpha\in S}\sum_{i,j=1}^{n_{\alpha}} (y^{\alpha}_{i,j})^*y^{\alpha}_{i,j} &= \sum_{\alpha\in S}\sum_{i,j=1}^{n_{\alpha}} n_{\alpha}(A(\alpha)u^{\alpha})_{j,i}[(A(\alpha)u^{\alpha})_{j,i}]^*\\
&=\sum_{\alpha\in S}\sum_{i,j=1}^{n_{\alpha}} n_{\alpha}(A(\alpha)u^{\alpha})_{j,i}[(u^{\alpha})^*A(\alpha)^*]_{i,j}\\
&=\sum_{\alpha\in S}n_{\alpha}\mathrm{tr}(A(\alpha)u^{\alpha}(u^{\alpha})^*A(\alpha)^*)\\
&=\sum_{\alpha\in S} n_{\alpha}\mathrm{tr}(A(\alpha) A(\alpha)^*)1_A\\
&=\sum_{\alpha\in S} n_{\alpha}\mathrm{tr}(A(\alpha)^* A(\alpha))1_A
\end{align*}
and
\begin{align*}
\sum_{\alpha\in S}\sum_{i,j=1}^{n_{\alpha}} y^{\alpha}_{i,j}(y^{\alpha})_{i,j}^* &= \sum_{\alpha\in S}\sum_{i,j=1}^{n_{\alpha}} n_{\alpha}[(A(\alpha)u^{\alpha})_{j,i}]^*(A(\alpha)u^{\alpha})_{j,i}\\
&=\sum_{\alpha\in S}\sum_{i,j=1}^{n_{\alpha}} n_{\alpha}(\overline{A(\alpha)}\overline{u^{\alpha}})_{j,i}[(u^{\alpha})^t A(\alpha)^t]_{i,j}\\
&=\sum_{\alpha\in S}n_{\alpha}\mathrm{tr}(\overline{A(\alpha)}\overline{u^{\alpha}}(u^{\alpha})^tA(\alpha)^t)\\
&=\sum_{\alpha\in S} n_{\alpha}\mathrm{tr}(\overline{A(\alpha)} A(\alpha)^t)1_A\\
&=\sum_{\alpha\in S} n_{\alpha}\mathrm{tr}(A(\alpha)^* A(\alpha))1_A ,
\end{align*}
so that we have 
\begin{align*}
&\max\left\{ \left \| (\sum_{\alpha\in S}\sum_{i,j=1}^{n_{\alpha}} (y^{\alpha}_{i,j})^*y^{\alpha}_{i,j})^{\frac{1}{2}}\right \|_{L^{\infty}(\g)},\left \| (\sum_{\alpha\in S}\sum_{i,j=1}^{n_{\alpha}} y^{\alpha}_{i,j} (y^{\alpha}_{i,j})^* )^{\frac{1}{2}}\right \|_{L^{\infty}(\g)}\right \}\\
&\leq (\sum_{\alpha\in S}n_{\alpha}\mathrm{tr}(A(\alpha)^*A(\alpha)))^{\frac{1}{2}}\leq 1
\end{align*}

Finally, by Lemma \ref{lem-gaussian}, we have

\begin{align*}
KC_2&\geq \frac{1}{C} \sup_{(A(\alpha))_{\alpha\in S}}\sum_{\alpha\in S}\sum_{i,j=1}^{n_{\alpha}}\langle \sqrt{n_{\alpha}}(\widehat{f}(\alpha)u^{\alpha})_{j,i}, \sqrt{n_{\alpha}}[(A(\alpha)u^{\alpha})_{j,i}]^*\rangle_{L^1(\g),L^{\infty}(\g)}\\
&=\frac{1}{C}\sup_{(A(\alpha))_{\alpha\in S}}\sum_{\alpha\in S}\sum_{i,j=1}^{n_{\alpha}} n_{\alpha}\sum_{k,l=1}^{n_{\alpha}} \widehat{f}(\alpha)_{j,k}\overline{A(\alpha)_{j,l}} \langle u^{\alpha}_{k,i}, (u^{\alpha}_{l,i})^*\rangle_{L^1(\g),L^{\infty}(\g)}\\
&=\frac{1}{C}\sup_{(A(\alpha))_{\alpha\in S}}\sum_{\alpha\in S}\sum_{i,j}^{n_{\alpha}}n_{\alpha}\sum_{k,l=1}^{n_{\alpha}}\frac{\delta_{k,l}}{n_{\alpha}}\widehat{f}(\alpha)_{j,k}\overline{A(\alpha)_{j,l}}\\
&=\frac{1}{C}\sup_{(A(\alpha))_{\alpha\in S}}\sum_{\alpha\in S} n_{\alpha}\mathrm{tr}(\widehat{f}(\alpha)A(\alpha)^*)\\
&=\frac{1}{C} (\sum_{\alpha\in S}n_{\alpha}\mathrm{tr}(\widehat{f}(\alpha)^*\widehat{f}(\alpha)))^{\frac{1}{2}}
\end{align*}
since the supremum runs over all $(A(\alpha))_{\alpha\in S}$ such that $\displaystyle \sum_{\alpha\in S}n_{\alpha}\mathrm{tr}(A(\alpha)^*A(\alpha))\leq 1$. Now we reach the conclusion since the finite set $S$ is chosen to be arbitrary.

\end{proof}

\begin{remark}\label{rmk-Khintchine}
In the category of compact groups, it seems that some experts already have their own way to understand Theorem \ref{thm1} through the Khintchine inequality. Indeed, the arguments below (\ref{Khintchine}) are simplified as follows. 

\begin{align*}
&\int \left \|\sum_{\pi\in S}\sum_{i,j=1}^{n_{\pi}}\sqrt{n_{\pi}}g^{\pi}_{i,j}(w)(\widehat{f}(\pi)\pi(x))_{j,i} \right \|_{L^1(G)}d\mathbb{P}(w)\\
&=\int_G \int \left |\sum_{\pi\in S}\sum_{i,j=1}^{n_{\pi}}\sqrt{n_{\pi}}g^{\pi}_{i,j}(w)(\widehat{f}(\pi)\pi(x))_{j,i}  \right |  d\mathbb{P}(w) dx\\
&\gtrsim \int_G (\sum_{\pi\in S}\sum_{i,j=1}^{n_{\pi}}n_{\pi} \left |(\widehat{f}(\pi)\pi(x))_{j,i}\right |^2)^{\frac{1}{2}} dx\\
&= \int_G (\sum_{\pi\in S} n_{\pi}\mathrm{tr}(\widehat{f}(\pi)\pi(x)\pi(x)^*\widehat{f}(\pi)^*))^{\frac{1}{2}} dx\\
&=(\sum_{\pi\in S} n_{\pi} \mathrm{tr}(\widehat{f}(\pi)^*\widehat{f}(\pi)))^{\frac{1}{2}}.
\end{align*}

\end{remark}

\begin{remark}
Let $\g$ be a compact quantum group of Kac type. If we suppose that $\displaystyle \sum_{\alpha\in \mathrm{Irr}(\g)}n_{\alpha}\mathrm{tr}(X_{\alpha}^*X_{\alpha})<\infty$, then the multiplier 
\[\Phi:L^p(\g)\rightarrow \ell^1(\widehat{\g}), f\mapsto (X_{\alpha}\widehat{f}(\alpha))_{\alpha\in \mathrm{Irr}(\g)},\]
is bounded for all $2\leq p\leq \infty$ since $L^p(\g)\subseteq L^2(\g)$ for such $p$. The converse is also true thanks to Theorem \ref{thm-Kac}. Indeed, boundedness of $\Phi$ implies that the multiplier
\[\Psi:\ell^{\infty}(\widehat{\g})\rightarrow M(\g),(A_{\alpha})_{\alpha\in \mathrm{Irr}(\g)}\mapsto \sum_{\alpha\in \mathrm{Irr}(\g)}d_{\alpha}\mathrm{tr}(A_{\alpha}X_{\alpha}^*u^{\alpha})\]
is bounded. Then, by Proposition \ref{lem2} and Theorem \ref{thm-Kac}, we can conclude that the space of such multipliers from $L^p(\g)$ into $\ell^1(\widehat{\g})$ can be identified with $L^2(\g)$ independently of the choice of $2\leq p\leq \infty$.
\end{remark}

\subsection{A partial answer on general cases}

In spite of the complete answer on the Kac case, the proof of Theorem \ref{thm-Kac} could not be applied for the general case for now. However, using the cotype 2 property of $L^1(\g)$, we are able to achieve a convincing conclusion for non-Kac cases.

\begin{lemma}\label{lem3}
\begin{enumerate}
\item For any $\alpha\in \mathrm{Irr}(\g)$ and any $1\leq i,j\leq n_{\alpha}$, we have
\[\left \|\sum_{k=1}^{n_{\alpha}}A_{ik}(u^{\alpha}_{k,j})^*\right \|_{L^{\infty}(\g)}\leq (Q_{\alpha}^{\frac{1}{2}})_{j,j}(\sum_{k=1}^{n_{\alpha}}|(A_{\alpha}Q_{\alpha}^{-\frac{1}{2}})_{i,k}|^2)^{\frac{1}{2}}.\]

\item For any $\alpha\in \mathrm{Irr}(\g)$ and any $1\leq i,j\leq n_{\alpha}$, we have
\[\left \|\sum_{k=1}^{n_{\alpha}}(BQ_{\alpha})_{i,k}u^{\alpha}_{k,j}\right \|_{L^1(\g)}\geq \frac{(Q_{\alpha}^{-\frac{1}{2}})_{j,j}}{d_{\alpha}}(\sum_{k=1}^{n_{\alpha}}|(BQ_{\alpha}^{\frac{1}{2}})_{i,k}|^2)^{\frac{1}{2}}.\]
\end{enumerate}
\end{lemma}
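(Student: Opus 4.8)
The plan is to reduce both inequalities to the Schur orthogonality relations, computing the relevant $L^\infty$- and $L^1$-norms of a single row (resp. column) vector of matrix coefficients. For part (1), I would first observe that the element $y := \sum_{k} A_{ik}(u^\alpha_{k,j})^*$ lies in $\mathrm{Pol}(\g) \subseteq L^\infty(\g)$, so $\|y\|_{L^\infty(\g)}^2 = \|y^*y\|_{L^\infty(\g)} = \|yy^*\|_{L^\infty(\g)}$. Expanding $yy^* = \sum_{k,l} A_{ik}\overline{A_{il}}(u^\alpha_{k,j})^* u^\alpha_{l,j}$ and using the relation $\sum_{k,l}(u^\alpha_{k,j})^* u^\alpha_{l,j} = \cdots$ — more precisely, the unitarity relation $\sum_k u^\alpha_{k,i}(u^\alpha_{k,j})^* = \delta_{ij}1_A$ applied after passing to the unitarized representation $Q_\alpha^{1/2}\overline{u^\alpha}Q_\alpha^{-1/2}$ — should collapse the sum. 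The key algebraic point is that $(u^\alpha_{k,j})^*$ is, up to the scaling factors $(Q_\alpha^{1/2})_{j,j}$ and $(Q_\alpha^{-1/2})_{k,k}$ (recall $Q_\alpha$ is diagonal), a matrix coefficient of a \emph{unitary} representation; rewriting the sum in terms of those unitary coefficients produces an honest isometry on the corresponding $\ell^2$-vector, which gives exactly the factor $(Q_\alpha^{1/2})_{j,j}(\sum_k |(A_\alpha Q_\alpha^{-1/2})_{i,k}|^2)^{1/2}$ after absorbing the diagonal scalings into $A_\alpha$.

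For part (2), the strategy is dual: bound the $L^1$-norm from below by testing against a suitable element of $L^\infty(\g)$ of controlled norm and invoking the pairing formula of Lemma~\ref{lem1} (or the Schur relations directly). Given $z := \sum_k (BQ_\alpha)_{i,k} u^\alpha_{k,j}$, I would choose the test functional coming from the vector state or from an appropriately normalized matrix coefficient $w = \sum_k c_k (u^\alpha_{k,j})^*$, so that $\langle z, w\rangle_{L^1,L^\infty}$ equals $\sum_k (BQ_\alpha)_{i,k}\overline{c_k}$ times the Schur constant $\frac{1}{d_\alpha}$ (the $Q_\alpha^{-1}$-weighted inner product), and then optimize the choice of $(c_k)$ subject to $\|w\|_{L^\infty(\g)} \le 1$. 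By part (1), the constraint $\|w\|_{L^\infty(\g)}\le 1$ translates into an $\ell^2$-ball constraint on the weighted coefficients, and Cauchy--Schwarz in the appropriate weighted $\ell^2$ then yields the stated lower bound with the factor $\frac{(Q_\alpha^{-1/2})_{j,j}}{d_\alpha}$. In effect, (2) is the $\ell^2$-duality companion of (1), with the quantum dimension $d_\alpha$ entering through the normalization in the Schur orthogonality relation $h((u^\beta_{s,t})^* u^\alpha_{i,j}) = \delta_{\alpha\beta}\delta_{jt}(Q_\alpha^{-1})_{i,s}/\mathrm{tr}(Q_\alpha)$.

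The main obstacle I anticipate is bookkeeping the diagonal $Q_\alpha$-weights correctly at each step: the matrix coefficients $u^\alpha_{k,j}$ are not themselves coefficients of unitary matrices (that is only true after conjugating $\overline{u^\alpha}$ by $Q_\alpha^{\pm 1/2}$), so one must carefully track which index carries a $(Q_\alpha^{1/2})_{j,j}$ and which carries a $(Q_\alpha^{-1/2})_{k,k}$, and verify that these combine to give precisely $A_\alpha Q_\alpha^{-1/2}$ in (1) and $BQ_\alpha^{1/2}$ in (2) rather than some other weighting. A secondary subtlety is that in part (1) one needs the $C$- and $R$-versions of the norm to agree (so that the $L^\infty$-norm can be computed from $yy^*$), which is automatic here because $y$ is a single element rather than a sequence — so the norm is just $\|y\|_{L^\infty(\g)}$ and there is nothing to reconcile. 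Once the weights are pinned down, both estimates follow from a one-line application of the Schur relations plus Cauchy--Schwarz, so I would present the computation for (1) in detail and then indicate (2) as the dual statement, spelling out only the choice of optimizing test element.
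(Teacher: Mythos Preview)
Your approach is correct and coincides with the paper's: part~(1) follows by rewriting $\sum_k A_{ik}(u^\alpha_{k,j})^* = (Q_\alpha^{1/2})_{j,j}\sum_k(AQ_\alpha^{-1/2})_{i,k}\,v_{k,j}$ for the unitary $v=Q_\alpha^{1/2}\overline{u^\alpha}Q_\alpha^{-1/2}$ and applying the operator Cauchy--Schwarz inequality together with the column relation $\sum_k v_{k,j}^*v_{k,j}=1_A$, while part~(2) is exactly the duality argument you describe, using (1) to control the $L^\infty$-norm of the test element and the Schur relation $h((u^\alpha_{k,j})^*u^\alpha_{k,j})=(Q_\alpha^{-1})_{k,k}/d_\alpha$ to evaluate the pairing before optimizing. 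The only adjustment needed is that after passing to $v$ you should bound $y^*y$ rather than $yy^*$ (it is $\sum_k v_{k,j}^*v_{k,j}$, not $\sum_k v_{k,j}v_{k,j}^*$, that equals $1$), and the outcome is an \emph{inequality} via Cauchy--Schwarz rather than an ``honest isometry''---precisely the bookkeeping issue you anticipated.
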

\begin{proof}
\begin{enumerate}
\item Since $Q_{\alpha}^{\frac{1}{2}}\overline{u^{\alpha}}Q_{\alpha}^{-\frac{1}{2}}$ is a unitary representation, we have
\begin{align*}
&(Q_{\alpha}^{-\frac{1}{2}})_{j,j}\left \|\sum_{k=1}^{n_{\alpha}}A_{i,k}(u^{\alpha}_{k,j})^*\right \|_{L^{\infty}(\g)}\\
=~&\left \|\sum_{k=1}^{n_{\alpha}}(AQ_{\alpha}^{-\frac{1}{2}})_{i,k}[Q_{\alpha}^{\frac{1}{2}}\overline{u}Q_{\alpha}^{-\frac{1}{2}}]_{k,j}\right \|_{L^{\infty}(\g)}\\
\leq ~& (\sum_{k=1}^{n_{\alpha}}|(AQ_{\alpha}^{-\frac{1}{2}})_{i,k}|^2)^{\frac{1}{2}}(\sum_{k=1}^{n_{\alpha}}[Q_{\alpha}^{\frac{1}{2}}\overline{u}Q_{\alpha}^{-\frac{1}{2}}]_{k,j}^*[Q_{\alpha}^{\frac{1}{2}}\overline{u}Q_{\alpha}^{-\frac{1}{2}}]_{k,j})^{\frac{1}{2}}\\
=&(\sum_{k=1}^{n_{\alpha}}|(AQ_{\alpha}^{-\frac{1}{2}})_{i,k}|^2)^{\frac{1}{2}}.
\end{align*}

\item Define $w_{i,k}=(AQ_{\alpha}^{-\frac{1}{2}})_{i,k}$ for all $1\leq i,k\leq n_{\alpha}$ for convenience. Then, by $(1)$, we can see that
\begin{align*}
&\left \|\sum_{k=1}^{n_{\alpha}}(BQ_{\alpha})_{i,k}u^{\alpha}_{k,j}\right \|_{L^1(\g)}\\
&\geq \sup_{(\sum_{k=1}^{n_{\alpha}}|w_{i,k}|^2)^{\frac{1}{2}}\leq 1}\langle \sum_{k=1}^{n_{\alpha}}(BQ_{\alpha})_{i,k}u^{\alpha}_{k,j}, (Q_{\alpha}^{-\frac{1}{2}})_{j,j}\sum_{k=1}^{n_{\alpha}}A_{i,k}(u^{\alpha}_{k,j})^* \rangle_{L^1(\g),L^{\infty}(\g)}\\
&=(Q_{\alpha}^{-\frac{1}{2}})_{j,j}\sup_{(\sum_{k=1}^{n_{\alpha}}|w_{i,k}|^2)^{\frac{1}{2}}\leq 1}\sum_{k=1}^{n_{\alpha}}(BQ_{\alpha})_{i,k}w_{i,k}(Q_{\alpha}^{\frac{1}{2}})_{k,k}h((u^{\alpha}_{k,j})^*u^{\alpha}_{k,j})\\
&=(Q_{\alpha}^{-\frac{1}{2}})_{j,j}\sup_{(\sum_{k=1}^{n_{\alpha}}|w_{i,k}|^2)^{\frac{1}{2}}\leq 1}\sum_{k=1}^{n_{\alpha}}\frac{(BQ_{\alpha}^{\frac{1}{2}})_{i,k}}{d_{\alpha}}w_{i,k}\\
&=(Q_{\alpha}^{-\frac{1}{2}})_{j,j}(\sum_{k=1}^{n_{\alpha}}\frac{|(BQ_{\alpha}^{\frac{1}{2}})_{i,k}|^2}{d_{\alpha}^2})^{\frac{1}{2}}=\frac{(Q_{\alpha}^{-\frac{1}{2}})_{j,j}}{d_{\alpha}}(\sum_{k=1}^{n_{\alpha}} |(BQ_{\alpha}^{\frac{1}{2}})_{i,k}|^2)^{\frac{1}{2}}
\end{align*}

\end{enumerate}
\end{proof}

\begin{theorem}\label{thm-nonKac}
Suppose that all random Fourier series $f_U\sim \displaystyle \sum_{\alpha\in \mathrm{Irr}(\g)}d_{\alpha}\mathrm{tr}(U_{\alpha}\widehat{f}(\alpha)Q_{\alpha}u^{\alpha})$ are in $M(\g)$ for all $U=\displaystyle (U_{\alpha})_{\alpha\in \mathrm{Irr}(\g)}\in \mathcal{U}$. Then we have 
\[\sum_{\alpha\in \mathrm{Irr}(\g)}\frac{d_{\alpha}}{n_{\alpha}}\mathrm{tr}(Q_{\alpha}\widehat{f}(\alpha)^*\widehat{f}(\alpha))<\infty.\]
\end{theorem}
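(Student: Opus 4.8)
The plan is to mimic the structure of the proof of Theorem \ref{thm-Kac}, but replace the non-commutative Khintchine inequality (which is only available in the Kac case via Theorem \ref{thm-Khintchine}) with the cotype 2 property of $L^1(\g)$ from Theorem \ref{thm-cotype}, and to absorb all the $Q_\alpha$-twists using the pointwise estimate Lemma \ref{lem3}(2). First I would invoke Proposition \ref{lem2} to obtain a universal constant $K>0$ with
\[
K\left\|B\right\|_{\ell^\infty(\widehat\g)}\geq \left\|f_B\sim\sum_{\alpha\in\mathrm{Irr}(\g)}d_\alpha\mathrm{tr}(B_\alpha\widehat{f}(\alpha)Q_\alpha u^\alpha)\right\|_{M(\g)}
\]
for all $B\in\ell^\infty(\widehat\g)$. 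Fix a finite subset $S\subseteq\mathrm{Irr}(\g)$ and plug in the family of independent Gaussian random matrices $(G_{n_\alpha}(w))_{\alpha\in S}$, with $G_{n_\alpha}=(\tfrac{1}{\sqrt{n_\alpha}}g^\alpha_{i,j})_{i,j=1}^{n_\alpha}$, which lies in $\mathrm{Ball}(\ell^\infty(\widehat\g))$ up to the universal constant $C_2$ of Lemma \ref{lem-gaussian} after taking expectation of the operator norm. Thus
\[
KC_2\geq \mathbb{E}\left\|\,\sum_{\alpha\in S}\sum_{i,j=1}^{n_\alpha}\sqrt{n_\alpha}\,g^\alpha_{i,j}(w)\,(\widehat{f}(\alpha)Q_\alpha u^\alpha)_{j,i}\,\right\|_{L^1(\g)}.
\]

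Next I would apply the Gaussian cotype 2 inequality, Theorem \ref{thm-cotype}, to the finite family $\{x^\alpha_{i,j}=\sqrt{n_\alpha}(\widehat{f}(\alpha)Q_\alpha u^\alpha)_{j,i}\}$ indexed by $(\alpha,i,j)$, giving
\[
KC_2\geq K'\left(\sum_{\alpha\in S}\sum_{i,j=1}^{n_\alpha}n_\alpha\left\|(\widehat{f}(\alpha)Q_\alpha u^\alpha)_{j,i}\right\|_{L^1(\g)}^2\right)^{\frac12}
\]
for a universal $K'>0$. Now the point is to bound each summand from below using Lemma \ref{lem3}(2): writing $(\widehat{f}(\alpha)Q_\alpha u^\alpha)_{j,i}=\sum_{k}(\widehat{f}(\alpha)Q_\alpha)_{j,k}u^\alpha_{k,i}$ and applying part (2) with $B=\widehat{f}(\alpha)$ (row index $j$, column index $i$) yields
\[
\left\|(\widehat{f}(\alpha)Q_\alpha u^\alpha)_{j,i}\right\|_{L^1(\g)}\geq \frac{(Q_\alpha^{-\frac12})_{i,i}}{d_\alpha}\left(\sum_{k=1}^{n_\alpha}\bigl|(\widehat{f}(\alpha)Q_\alpha^{\frac12})_{j,k}\bigr|^2\right)^{\frac12}.
\]
Squaring and summing over $i$ and $j$, the sum over $i$ of $(Q_\alpha^{-1})_{i,i}$ is $\mathrm{tr}(Q_\alpha^{-1})=d_\alpha$, and the sum over $j$ of $\sum_k|(\widehat f(\alpha)Q_\alpha^{1/2})_{j,k}|^2$ is $\mathrm{tr}(Q_\alpha^{1/2}\widehat f(\alpha)^*\widehat f(\alpha)Q_\alpha^{1/2})=\mathrm{tr}(Q_\alpha\widehat f(\alpha)^*\widehat f(\alpha))$, so the double sum contributes a factor $\frac{d_\alpha}{d_\alpha^2}=\frac{1}{d_\alpha}$ times $\mathrm{tr}(Q_\alpha\widehat f(\alpha)^*\widehat f(\alpha))$. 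Combined with the extra $n_\alpha$ from $x^\alpha_{i,j}$, each $\alpha$-block contributes $\frac{n_\alpha}{d_\alpha}\mathrm{tr}(Q_\alpha\widehat f(\alpha)^*\widehat f(\alpha))$, and letting $S$ exhaust $\mathrm{Irr}(\g)$ gives $\sum_\alpha \frac{n_\alpha}{d_\alpha}\mathrm{tr}(Q_\alpha\widehat f(\alpha)^*\widehat f(\alpha))<\infty$.

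This, however, is not quite the stated inequality, which has the \emph{reciprocal} ratio $\frac{d_\alpha}{n_\alpha}$. The resolution is that $\widehat f$ can be replaced at the outset by $\widehat f Q_\alpha^{-1}$, or rather one should run the argument for the modified coefficient family so that the twist comes out inverted; concretely, since $d_\alpha\geq n_\alpha$ always, the estimate I sketched is actually \emph{stronger} than the claimed one only if the ratio went the other way, so the real plan is to choose the test matrices in Proposition \ref{lem2} and the duality bracket in Lemma \ref{lem3} so that the $Q_\alpha^{\pm1/2}$ factors land on the opposite side; I would pair against $y^\alpha_{i,j}$ built from $(A(\alpha)u^\alpha)$ exactly as in the Kac proof but now carrying the correct $Q_\alpha$-weights dictated by Lemma \ref{lem3}(1), and optimize over $A$ subject to $\sum_\alpha \frac{d_\alpha}{n_\alpha}\mathrm{tr}(Q_\alpha A(\alpha)^*A(\alpha))\leq 1$. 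The main obstacle is precisely this bookkeeping of the $Q_\alpha$-twists: one must verify that the family $(y^\alpha_{i,j})$ chosen to extract $\sum_\alpha\frac{d_\alpha}{n_\alpha}\mathrm{tr}(Q_\alpha\widehat f(\alpha)^*\widehat f(\alpha))$ still satisfies the row/column norm bounds $\max\{\|\cdot\|_C,\|\cdot\|_R\}\leq 1$ — which, because $Q_\alpha$ is no longer the identity, requires the Schur orthogonality relations in their twisted form rather than the bare identity $u^\alpha(u^\alpha)^*=1$ used in the Kac computation, and this is where Lemma \ref{lem3}(1) does the essential work of controlling the resulting $L^\infty$-norms by the correct scalar quantities.
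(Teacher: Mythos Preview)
Your overall strategy---Proposition \ref{lem2}, plug in Gaussian matrices, apply cotype 2 (Theorem \ref{thm-cotype}), then bound each $L^1$-norm below via Lemma \ref{lem3}(2)---is exactly the paper's proof. The problem is a single bookkeeping slip that then sends you off on the confused ``resolution'' in the last paragraph.

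When you expand $f_{G}\sim\sum_\alpha d_\alpha\mathrm{tr}(G_{n_\alpha}\widehat f(\alpha)Q_\alpha u^\alpha)$, the coefficient in front of $g^\alpha_{i,j}$ is $\dfrac{d_\alpha}{\sqrt{n_\alpha}}$, not $\sqrt{n_\alpha}$: you have copied the display from the proof of Theorem \ref{thm-Kac}, where $d_\alpha=n_\alpha$ makes the two coincide, but in the non-Kac setting they differ. With the correct coefficient the cotype-2 step gives
\[
KC_2\;\geq\; K'\Bigl(\sum_{\alpha\in S}\sum_{i,j=1}^{n_\alpha}\frac{d_\alpha^2}{n_\alpha}\Bigl\|\sum_{k}(\widehat f(\alpha)Q_\alpha)_{i,k}u^\alpha_{k,j}\Bigr\|_{L^1(\g)}^2\Bigr)^{1/2},
\]
and after Lemma \ref{lem3}(2) the factor $d_\alpha^2/n_\alpha$ combines with the $1/d_\alpha^2$ from the lemma and with $\sum_j(Q_\alpha^{-1})_{j,j}=d_\alpha$ to produce exactly $\dfrac{d_\alpha}{n_\alpha}\mathrm{tr}(Q_\alpha\widehat f(\alpha)^*\widehat f(\alpha))$ per block---the stated bound. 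No further fix is needed: the reciprocal ratio $n_\alpha/d_\alpha$ you obtained is purely an artefact of the wrong coefficient, and the repairs you sketch (replacing $\widehat f$ by $\widehat f Q_\alpha^{-1}$, or reverting to a Khintchine-style pairing with test vectors $y^\alpha_{i,j}$) are both unnecessary and, in the second case, unavailable, since Theorem \ref{thm-Khintchine} is precisely what fails outside the Kac case.
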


\begin{proof}
As in the proof of Theorem \ref{thm-Kac}, there exists a universal constant $K>0$, which is independent of the choice of finite subset $S\subseteq \mathrm{Irr}(\g)$, such that
\begin{align*}
K&\geq \int  \left \|\sum_{\alpha\in S}\sum_{i,j=1}^{n_{\alpha}}\frac{d_{\alpha}}{\sqrt{n_{\alpha}}}g^{\alpha}_{j,i}(w)\sum_{k=1}^{n_{\alpha}}(\widehat{f}(\alpha)Q_{\alpha})_{i,k}u^{\alpha}_{k,j}\right \|_{L^1(\g)}d\mathbb{P}(w)
\end{align*}
where $(g^{\alpha}_{j,i}(w))_{\alpha\in \mathrm{Irr}(\g),1\leq i,j\leq n_{\alpha}}$ is a i.i.d. family of Gaussian variables with mean zero and variance $1$.

Let $v^{\alpha}_i$ be the $i$-th row vector of the matrix $\widehat{f}(\alpha)Q_{\alpha}^{\frac{1}{2}}$. Then, by Lemma \ref{lem3} and Theorem \ref{thm-cotype}, we have that
\begin{align*}
K&\geq K' (\sum_{\alpha\in S}\sum_{i,j=1}^{n_{\alpha}}\frac{d_{\alpha}^2}{n_{\alpha}}\left \| \sum_{k=1}^{n_{\alpha}}(\widehat{f}(\alpha)Q_{\alpha})_{i,k}u^{\alpha}_{k,j} \right\|_{L^1(\g)}^2)^{\frac{1}{2}}\\
&\geq K' (\sum_{\alpha\in S}\sum_{i,j=1}^{n_{\alpha}}\frac{(Q_{\alpha}^{-1})_{j,j}\left \| v^{\alpha}_i \right\| ^2}{n_{\alpha}})^{\frac{1}{2}}\\
&=(\sum_{\alpha\in S}\frac{d_{\alpha}}{n_{\alpha}}\mathrm{tr}(Q_{\alpha}\widehat{f}(\alpha)^*\widehat{f}(\alpha)))^{\frac{1}{2}}.
\end{align*}

Since $S\subseteq \mathrm{Irr}(\g)$ is arbitrary, we can conclude that 
\[ \sum_{\alpha\in \mathrm{Irr}(\g)}\frac{d_{\alpha}}{n_{\alpha}}\mathrm{tr}(Q_{\alpha}\widehat{f}(\alpha)^*\widehat{f}(\alpha))<\infty.\]

\end{proof}

\begin{remark}\label{rmk1}
Although Theorem \ref{thm-nonKac} does not give the complete conclusion for non-Kac cases, we can expect that the quantity $\displaystyle \frac{d_{\alpha}}{n_{\alpha}}$ gets very close to the desired quantity of $d_{\alpha}$. At least in certain cases, $n_{\alpha}$ is significantly smaller than the quantum dimension $d_{\alpha}$. Indeed, if the function $\alpha\mapsto n_{\alpha}$ has subexponential growth and if $\g$ is of non-Kac type, then the function $\alpha\mapsto d_{\alpha}$ is of exponential growth \cite{DPR16}. In particular, the Drinfeld-Jimbo deformations $G_q$ with $0<q<1$ are important non-Kac type compact quantum groups. In this case, the function $\alpha\mapsto n_{\alpha}$ is of subexponential growth.
\end{remark}

Let us clarify what Remark \ref{rmk1} implies for concrete example $\g=SU_q(2)$.

\begin{corollary}
Let $\g=SU_q(2)$ with $0<q<1$ and suppose that all random fourier series $f_U\sim \displaystyle \sum_{\alpha\in \mathrm{Irr}(\g)}d_{\alpha}\mathrm{tr}(U_{\alpha}\widehat{f}(\alpha)Q_{\alpha}u^{\alpha})$ are in $M(\g)$ for all $U\displaystyle =(U_{\alpha})_{\alpha\in \mathrm{Irr}(\g)}\in \mathcal{U}$. Then we have
\[\sum_{\alpha\in \mathrm{Irr}(\g)}d_{\alpha}^{1-\epsilon}\mathrm{tr}(Q_{\alpha}\widehat{f}(\alpha)^*\widehat{f}(\alpha))<\infty~for~each~\epsilon>0.\]
\end{corollary}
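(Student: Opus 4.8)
The plan is to deduce this corollary directly from Theorem~\ref{thm-nonKac} by unwinding the representation-theoretic data of $SU_q(2)$. Recall that the irreducible representations of $SU_q(2)$ are indexed by $\alpha\in\frac{1}{2}\mathbb{N}$ (spins $0,\tfrac12,1,\dots$), with classical dimension $n_\alpha=2\alpha+1$ growing \emph{linearly} in $\alpha$, while the quantum dimension is $d_\alpha=[2\alpha+1]_q=\frac{q^{-(2\alpha+1)}-q^{(2\alpha+1)}}{q^{-1}-q}$, which grows \emph{exponentially} like $q^{-2\alpha}$ as $\alpha\to\infty$. Thus $n_\alpha$ is of subexponential (indeed polynomial) growth and $d_\alpha$ is of exponential growth, exactly the situation flagged in Remark~\ref{rmk1}.

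First I would invoke Theorem~\ref{thm-nonKac}: under the standing hypothesis that all $f_U\in M(\g)$, we have
\[
\sum_{\alpha\in\mathrm{Irr}(\g)}\frac{d_\alpha}{n_\alpha}\,\mathrm{tr}(Q_\alpha\widehat{f}(\alpha)^*\widehat{f}(\alpha))<\infty.
\]
The task then reduces to the purely numerical claim that, for each fixed $\eps>0$, there is a constant $C_\eps>0$ with $d_\alpha^{1-\eps}\le C_\eps\,\frac{d_\alpha}{n_\alpha}$ for all $\alpha$, i.e.\ $n_\alpha\le C_\eps\, d_\alpha^{\eps}$. Since each summand $\mathrm{tr}(Q_\alpha\widehat{f}(\alpha)^*\widehat{f}(\alpha))$ is nonnegative, such a pointwise domination of coefficients immediately upgrades the finiteness of $\sum_\alpha \frac{d_\alpha}{n_\alpha}\mathrm{tr}(\cdots)$ to finiteness of $\sum_\alpha d_\alpha^{1-\eps}\mathrm{tr}(\cdots)$.

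To verify $n_\alpha\le C_\eps\,d_\alpha^{\eps}$, I would use the explicit formulas: $n_\alpha=2\alpha+1$ and $d_\alpha=[2\alpha+1]_q\ge \frac{q^{-(2\alpha+1)}-1}{q^{-1}-1}\ge c\, q^{-2\alpha}$ for a suitable $c>0$ depending only on $q$. Hence $d_\alpha^\eps\ge c^\eps q^{-2\eps\alpha}$, which grows exponentially in $\alpha$, while $n_\alpha=2\alpha+1$ grows only linearly; therefore the ratio $n_\alpha/d_\alpha^\eps$ is bounded over $\alpha\in\frac12\mathbb{N}$, giving the desired $C_\eps$. (One could alternatively cite Remark~\ref{rmk1} / \cite{DPR16} for the general subexponential-vs-exponential dichotomy, but for $SU_q(2)$ the one-line estimate above is cleaner and self-contained.)

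I do not anticipate a serious obstacle here: the only mild point is to make sure the quantum-dimension formula and its exponential lower bound are correctly normalized for $0<q<1$, and that the index set and the labels $n_\alpha$, $d_\alpha$, $Q_\alpha$ are the ones matching the conventions of Section~\ref{sec:pre}. Once the elementary inequality $n_\alpha\lesssim_\eps d_\alpha^{\eps}$ is in hand, the corollary is immediate from Theorem~\ref{thm-nonKac} by term-by-term comparison of nonnegative series.
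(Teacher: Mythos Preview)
Your proposal is correct and follows essentially the same approach as the paper: invoke Theorem~\ref{thm-nonKac}, then use the explicit $SU_q(2)$ data to show $n_\alpha \lesssim_\eps d_\alpha^\eps$ and conclude by term-by-term comparison of nonnegative series. The only cosmetic differences are that the paper indexes $\mathrm{Irr}(\g)$ by $k\in\{0\}\cup\mathbb{N}$ (so $n_k=k+1$, $d_k=q^{-k}+q^{-k+2}+\cdots+q^k\ge q^{-k}$) rather than by half-integer spins, and it records an explicit bound $\frac{n_k}{d_k^\eps}\le (k+1)q^{\eps k}\le \frac{1}{(1-q^\eps)^2}$ in place of your qualitative ``linear vs.\ exponential'' argument.
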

\begin{proof}
It is known that $\mathrm{Irr}(\g)$ is identified with $\left \{0\right\}\cup \n$ and $n_k=k+1$ for all $k\in \mathrm{Irr}(\g)=\left \{0\right\}\cup \n$. Also,  $\displaystyle d_k=q^{-k}+q^{-k+2}+\cdots+q^k \geq q^{-k}$ for all $k\geq 0$. Therefore, for each $\epsilon>0$,
\begin{align*}
\sum_{k\geq 0}d_k^{1-\epsilon}\mathrm{tr}(Q_k\widehat{f}(k)^*\widehat{f}(k))&\leq \sum_{k\geq 0} \frac{n_k}{d_k^{\epsilon}}\frac{d_k}{n_k}\mathrm{tr}(Q_k\widehat{f}(k)^*\widehat{f}(k))\\
&\leq \sum_{k\geq 0}\frac{k+1}{q^{-\epsilon k}}\frac{d_k}{n_k}\mathrm{tr}(Q_k\widehat{f}(k)^*\widehat{f}(k))\\
&\leq \frac{1}{(1-q^{\epsilon})^2}\sum_{k\geq 0}\frac{d_k}{n_k}\mathrm{tr}(Q_k\widehat{f}(k)^*\widehat{f}(k))<\infty.
\end{align*}
\end{proof}

\subsection{A remark on central forms}

While Theorem \ref{thm-nonKac} clearly contains insufficient points to reach the conclusion, we conjecture that the ultimate conclusion might turn out to be positive. The reason is that if the starting measure $f\in M(\g)$ has a specific form, the so-called central form, it can offset the deficit considerably.

Throughout this subsection, we will consider a sub von Neumann algebra $N$ generated by $\left \{ \chi_{\alpha}\right\}_{\alpha\in \mathrm{Irr}(\g)}$ in $L^{\infty}(\g)$. Then the restriction of Haar state to $N$ is a finite tracial state since 
\[h(\chi_{\alpha}\chi_{\beta})=h(\chi_{\overline{\alpha}}^*\chi_{\beta})=\delta_{\overline{\alpha},\beta}=\delta_{\overline{\beta},\alpha}=h(\chi_{\overline{\beta}}^*,\chi_{\alpha})=h(\chi_{\beta}\chi_{\alpha})~\mathrm{for~all~}\alpha,\beta\in \mathrm{Irr}(\g).\]

\begin{lemma}\label{lem4}
Let $\mathbb{G}$ be a compact quantum group satisfying $\displaystyle \inf_{\alpha\in \mathrm{Irr}(\g)}h(|\chi_{\alpha}|)>0$. Then there exists a universal constant $K$ satisfying the following: for any sequence $(a_{\alpha})_{\alpha\in \mathrm{Irr}(\g)}$ with $\displaystyle \sum_{\alpha\in \mathrm{Irr}(\mathbb{G})}|a_{\alpha}|^2\leq 1$, there exists $\displaystyle (T_i)_i\subseteq \mathrm{Pol}(\g)\cap K\cdot \mathrm{Ball}(L^{\infty}(\g))$ such that $\displaystyle \lim_i \left |h(T_i \chi_{\alpha}^*)\right |\geq |a_{\alpha}|$ for all $\alpha\in \mathrm{Irr}(\g)$.
\end{lemma}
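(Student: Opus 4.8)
The plan is to realise the test functions $T_i$ as suitable linear combinations of characters, and to use the tracial structure of the von Neumann algebra $N$ generated by $\{\chi_\alpha\}_{\alpha\in\mathrm{Irr}(\g)}$. Working inside $N$ with the trace $\tau=h|_N$ has the advantage that $N$ is a finite von Neumann algebra, so we may freely use polar decompositions and spectral calculus, and the $L^1$--$L^\infty$ duality on $N$ is governed by $\tau$. The hypothesis $\inf_\alpha h(|\chi_\alpha|)=:c>0$ will be exactly what lets us convert an $\ell^2$-sequence of scalars into an honest bounded element.

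First I would fix $(a_\alpha)_{\alpha}$ with $\sum_\alpha|a_\alpha|^2\le 1$. For each finite subset $F\subseteq\mathrm{Irr}(\g)$ consider the element $S_F=\sum_{\alpha\in F}\lambda_\alpha\,v_\alpha\in N$, where $v_\alpha$ is the partial isometry coming from the polar decomposition $\chi_\alpha=v_\alpha|\chi_\alpha|$ (equivalently $\chi_{\overline\alpha}=|\chi_\alpha|v_\alpha^*$, using $\chi_\alpha^*=\chi_{\overline\alpha}$), and $\lambda_\alpha$ is a scalar of modulus $|a_\alpha|/c$ chosen so that $\lambda_\alpha h(v_\alpha\chi_\alpha^*)=\lambda_\alpha h(|\chi_\alpha|)$ has modulus $\ge |a_\alpha|$ — note $h(v_\alpha\chi_\alpha^*)=h(v_\alpha v_\alpha^*|\chi_\alpha|)=h(|\chi_\alpha|)\ge c$, using that $v_\alpha v_\alpha^*$ is the range projection of $|\chi_\alpha|$, hence $\ge$ its support, times $|\chi_\alpha|$ gives back $|\chi_\alpha|$. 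The content is the uniform bound $\|S_F\|_{L^\infty(\g)}\le K$ independently of $F$. Since the $v_\alpha$ need not be "free" or orthogonal in any naive sense, I would estimate $\|S_F\|_\infty^2=\|S_F^*S_F\|_\infty$ by expanding $S_F^*S_F=\sum_{\alpha,\beta\in F}\overline{\lambda_\alpha}\lambda_\beta\,v_\alpha^*v_\beta$ and controlling the off-diagonal terms; the key orthogonality input is the Schur relation $h(\chi_\alpha\chi_\beta^*)=\delta_{\alpha,\beta}$, i.e. the characters form an orthonormal system in $L^2(\g)$, which forces a quantitative near-orthogonality of the $v_\alpha$ once combined with $|\chi_\alpha|\ge$ its support projection scaled by $c$ in $L^1$. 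Concretely, I expect to obtain $\|S_F\|_\infty\le C/c$ for a universal $C$ by a Cotlar--Stein type argument or, more elementarily, by testing against vectors in $L^2(N)$ and using that $\{\Lambda(\chi_\alpha)\}$ is orthonormal together with $v_\alpha=\chi_\alpha|\chi_\alpha|^{-1}$ acting boundedly.

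Having produced a net $(S_F)_F$ in $K\cdot\mathrm{Ball}(L^\infty(\g))$ with $|h(S_F\chi_\alpha^*)|=|\lambda_\alpha|\,h(|\chi_\alpha|)\ge|a_\alpha|$ for all $\alpha\in F$, I would pass to the $T_i$: since $S_F\in N\subseteq L^\infty(\g)$ may not lie in $\mathrm{Pol}(\g)$, approximate each $S_F$ in the strong topology by elements of $\mathrm{span}\{\chi_\alpha\}\subseteq\mathrm{Pol}(\g)$ (by Kaplansky density applied to the $*$-algebra generated by the characters, keeping the norm bound $\le K$), which changes each pairing $h(\,\cdot\,\chi_\alpha^*)$ by an arbitrarily small amount since $\chi_\alpha^*\in L^1(\g)$; relabel the resulting doubly-indexed family as $(T_i)_i$ and take a diagonal/subnet so that $\liminf_i|h(T_i\chi_\alpha^*)|\ge|a_\alpha|$ for every $\alpha$.

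The main obstacle is the uniform bound $\|S_F\|_{L^\infty(\g)}\le K$: a priori summing $|a_\alpha|/c$ many partial isometries could blow up, and the only thing preventing this is the near-orthogonality of the $v_\alpha$ in the finite von Neumann algebra $N$. I would handle this by the $L^2$-testing estimate sketched above — for $\xi=\sum\mu_\gamma\Lambda(\chi_\gamma)$ one computes $\langle S_F^*S_F\xi,\xi\rangle$ and bounds the resulting quadratic form using $\sum|\lambda_\alpha|^2\le 1/c^2$ and the orthonormality of $\{\Lambda(\chi_\alpha)\}$ — which, combined with the identity $v_\alpha\Lambda(|\chi_\alpha|\,\cdot\,)=\Lambda(\chi_\alpha\,\cdot\,)$, keeps everything controlled by $1/c$. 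If a fully elementary bound proves elusive, the fallback is to invoke the fact that $N$, being generated by the characters with tracial state, is hyperfinite (indeed a direct sum/integral of matrix algebras and $L^\infty$ of a probability space, as in the fusion-algebra picture) and to run the estimate blockwise.
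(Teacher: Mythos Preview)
Your approach has a genuine gap at exactly the point you flag as ``the main obstacle'': the uniform bound $\|S_F\|_{L^\infty(\g)}\le K$ for $S_F=\sum_{\alpha\in F}\lambda_\alpha v_\alpha$ with $\sum|\lambda_\alpha|^2\le 1/c^2$ simply does not hold, and none of the mechanisms you propose can rescue it. The clearest way to see this is the commutative case $\g=\mathbb{T}$: there $|\chi_n|=1$, so $v_n=\chi_n=e^{in\theta}$, $c=1$, and your $S_F$ is just $\sum_{n\in F}\lambda_n e^{in\theta}$. Orthonormality of the characters in $L^2$ is of course available, yet $\bigl\|\sum\lambda_n e^{in\theta}\bigr\|_\infty$ is \emph{not} controlled by $\|(\lambda_n)\|_{\ell^2}$; that is the whole point of the distinction between $L^2$ and $L^\infty$. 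Your $L^2$-testing argument only recovers the $L^2$-norm of $S_F$, not its operator norm as a multiplier, and a Cotlar--Stein scheme needs almost-orthogonality of the \emph{operators} $v_\alpha$ (e.g.\ decay of $\|v_\alpha^*v_\beta\|$), which orthonormality of the $\chi_\alpha$ in $L^2$ does not provide. The fallback via hyperfiniteness is also unfounded: $N$ need not be hyperfinite, and even when it is the blockwise estimate reduces to the same false inequality.

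There is a second, smaller gap: you assert $|h(S_F\chi_\alpha^*)|=|\lambda_\alpha|\,h(|\chi_\alpha|)$, but the cross terms $\lambda_\beta h(v_\beta\chi_\alpha^*)$ for $\beta\ne\alpha$ do not vanish in general, since $v_\beta$ is not $\chi_\beta$. The paper's proof sidesteps both issues by invoking a nontrivial external result, Theorem~5 of Lust-Piquard \cite{Lu-Pi97}, which says precisely that in a finite von Neumann algebra with an orthonormal system $(\chi_\alpha)$ satisfying $\inf_\alpha\tau(|\chi_\alpha|)>0$, every $\ell^2$-sequence $(a_\alpha)$ is dominated by the Fourier coefficients of some element of bounded $L^\infty$-norm. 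That theorem does \emph{not} produce $T$ as a naive linear combination of the $v_\alpha$; its construction is genuinely more subtle (randomisation and noncommutative Khintchine inequalities enter). Once $T$ is in hand, the Kaplansky density step you describe is correct and matches the paper.
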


\begin{proof}

By [Theorem 5, \cite{Lu-Pi97}], there exists a universal constant $K>0$ and for any such sequence $(a_{\alpha})_{\alpha\in \mathrm{Irr}(\g)}$, there exists $T\in K\cdot \mathrm{Ball}(N)$ such that $\displaystyle T\sim \sum_{\alpha\in \mathrm{Irr}(\g)}c_{\alpha}\chi_{\alpha}$ with $|c_{\alpha}|\geq |a_{\alpha}|$ for all $\alpha\in \mathrm{Irr}(\g)$.

Lastly, since $\mathcal{A}=\mathrm{Pol}(\g)\cap N$ is a unital $*$-algebra and dense in $N$ under the weak $*$-topology, Ball($\mathcal{A})$ is dense in Ball$ (N)$ by the Kaplansky's density theorem. Thus, we obtain a net $(T_i)_i \subseteq \mathrm{Pol}(\g)\cap K\cdot \mathrm{Ball}(L^{\infty}(\g))$ satisfying
\[\left | c_{\alpha}\right |=h(T\chi_{\alpha}^*)=\lim_i \left |h(T_i \chi_{\alpha}^*)\right |\geq \left |a_{\alpha}\right |\]
for all $\alpha\in \mathrm{Irr}(\g)$.
\end{proof}

\begin{theorem}\label{thm-central}
Let $\g$ be a compact quantum group such that $\displaystyle \inf_{\alpha\in \mathrm{Irr}(\g)}h(|\chi_{\alpha}|)>0$ aud suppose that all random Fourier series $f_U\sim \displaystyle \sum_{\alpha\in \mathrm{Irr}(\g)}d_{\alpha}\mathrm{tr}(U_{\alpha}\widehat{f}(\alpha)Q_{\alpha}u^{\alpha})$ are in $M(\g)$ for all $U=\displaystyle (U_{\alpha})_{\alpha\in \mathrm{Irr}(\g)}\in \mathcal{U}$. Then we have
\[\sum_{\alpha\in \mathrm{Irr}(\g)}\mathrm{tr}(|\widehat{f}(\alpha)|)^2<\infty.\]

\end{theorem}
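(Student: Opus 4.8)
The plan is to combine the random-matrix/Khintchine machinery of Theorem \ref{thm-nonKac} with the special test functions produced by Lemma \ref{lem4}, exploiting the fact that for a central form only the \emph{trace} of $\widehat{f}(\alpha)$ against the character matters. First I would reduce the boundedness hypothesis, via Proposition \ref{lem2}, to the existence of a universal $K>0$ such that for every finite $S\subseteq\mathrm{Irr}(\g)$ and every choice of matrices $(B_\alpha)_{\alpha\in S}$ with $\sup_\alpha\|B_\alpha\|\le 1$ one has $\|f_B\|_{M(\g)}\le K$. As in the proof of Theorem \ref{thm-Kac}, plugging in the normalized Gaussian random matrices $G_{n_\alpha}$ and using the lower Khintchine bound (Theorem \ref{thm-Khintchine}) — or, more efficiently in the non-Kac setting, the cotype 2 inequality (Theorem \ref{thm-cotype}) together with Lemma \ref{lem3} — yields a universal lower bound of the form
\[
K' \,\Bigl(\sum_{\alpha\in S}\sum_{i,j=1}^{n_\alpha}\tfrac{d_\alpha^2}{n_\alpha}\Bigl\|\sum_{k=1}^{n_\alpha}(\widehat f(\alpha)Q_\alpha)_{i,k}u^\alpha_{k,j}\Bigr\|_{L^1(\g)}^2\Bigr)^{\frac12}\ \le\ K .
\]
So far this is exactly Theorem \ref{thm-nonKac}; the point of the present refinement is to extract the stronger $\sum_\alpha \mathrm{tr}(|\widehat f(\alpha)|)^2$ by testing $f_B$ against the character test functions rather than against generic matrix coefficients.

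Next I would bring in Lemma \ref{lem4}. Write $\widehat f(\alpha)Q_\alpha = v_\alpha s_\alpha w_\alpha$ in singular-value form (or use the polar decomposition), so that $\mathrm{tr}(|\widehat f(\alpha)|)$ is, up to the $Q_\alpha$-bookkeeping, the $\ell^1$ norm of the singular values. The hypothesis $\inf_\alpha h(|\chi_\alpha|)>0$ is exactly what is needed to invoke Lemma \ref{lem4}: for any scalar sequence $(a_\alpha)$ with $\sum_\alpha|a_\alpha|^2\le 1$ there is a net $(T_i)\subseteq \mathrm{Pol}(\g)\cap K\cdot\mathrm{Ball}(L^\infty(\g))$ with $\lim_i|h(T_i\chi_\alpha^*)|\ge|a_\alpha|$. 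The idea is to pair $f_B$, for a cleverly chosen unitary-diagonal $B=(B_\alpha)$ that rotates $\widehat f(\alpha)$ so that $B_\alpha\widehat f(\alpha)Q_\alpha$ becomes (a scalar multiple of) the \emph{identity direction}, against these $T_i$: by Lemma \ref{lem1} the pairing $\langle f_B, T_i^*\rangle$ collapses to $\sum_\alpha d_\alpha\, \mathrm{tr}(B_\alpha\widehat f(\alpha)Q_\alpha\,\widehat{T_i}(\alpha)^*)$, and when $T_i$ is central this is $\sum_\alpha d_\alpha\,\overline{h(T_i\chi_\alpha^*)}\cdot \tfrac{1}{n_\alpha}\mathrm{tr}(B_\alpha\widehat f(\alpha)Q_\alpha)$ — here one uses that the character's Fourier coefficient is $\tfrac{1}{d_\alpha}Q_\alpha^{-1}$ together with the normalization. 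Choosing $B_\alpha$ to maximize $\mathrm{tr}(B_\alpha\widehat f(\alpha)Q_\alpha)$ turns this into $\mathrm{tr}(|\widehat f(\alpha)Q_\alpha^{?}|)$, i.e. the quantity we want, while $|h(T_i\chi_\alpha^*)|\ge|a_\alpha|$ lets us run the duality against the unit ball of $\ell^2$.

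Finally, combining the two inputs: for every $(a_\alpha)$ with $\sum|a_\alpha|^2\le 1$,
\[
K\cdot K \;\ge\; \limsup_i \bigl|\langle f_B, T_i^*\rangle\bigr| \;\ge\; \sum_{\alpha\in S} |a_\alpha|\,\mathrm{tr}(|\widehat f(\alpha)|),
\]
and taking the supremum over the $\ell^2$-ball of sequences $(a_\alpha)$ gives $\bigl(\sum_{\alpha\in S}\mathrm{tr}(|\widehat f(\alpha)|)^2\bigr)^{1/2}\le K^2$, uniformly in the finite set $S$; letting $S\uparrow\mathrm{Irr}(\g)$ finishes the proof. The main obstacle I anticipate is the bookkeeping in the middle step: getting the $Q_\alpha$ and $d_\alpha$, $n_\alpha$ factors to line up so that the character pairing really produces $\mathrm{tr}(|\widehat f(\alpha)|)$ (and not some $Q_\alpha$-weighted variant), and checking that a single diagonal-unitary $B$ can simultaneously be chosen for all $\alpha\in S$ — which is fine since $\mathcal U$ allows an independent $B_\alpha$ in each block — while still keeping the cotype/Khintchine lower bound applicable. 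The interchange of $\limsup_i$ with the (finite) sum over $S$ is routine since $S$ is finite.
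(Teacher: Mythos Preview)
Your core argument in the second and third paragraphs is correct and is exactly the paper's proof: bound $\|f_U\|_{M(\g)}$ uniformly via Proposition~\ref{lem2}, pair $f_U$ against the central test functions $T_i$ from Lemma~\ref{lem4} using Lemma~\ref{lem1}, optimize over $U$ to turn $\mathrm{tr}(U_\alpha\widehat f(\alpha))$ into $\mathrm{tr}(|\widehat f(\alpha)|)$, and finally dualize against the $\ell^2$-ball of scalar sequences $(a_\alpha)$. What is superfluous is your entire first paragraph: the paper uses \emph{no} Gaussian random matrices, no Khintchine inequality, and no cotype~2 here. Theorem~\ref{thm-nonKac} is not an ingredient; the character/duality argument stands on its own, and the only consequence of the hypothesis that is used is the uniform bound $\sup_U\|f_U\|_{M(\g)}<\infty$ from Proposition~\ref{lem2}.

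On the $Q_\alpha$-bookkeeping you flag as the main obstacle: it resolves cleanly and your intermediate formula is slightly off. Writing $T_i=\sum_\alpha c^i_\alpha\chi_\alpha$ one has $\widehat{T_i}(\alpha)Q_\alpha=\dfrac{c^i_\alpha}{d_\alpha}\mathrm{Id}_{n_\alpha}$, hence $\widehat{T_i}(\alpha)^*=\dfrac{\overline{c^i_\alpha}}{d_\alpha}Q_\alpha^{-1}$ and
\[
d_\alpha\,\mathrm{tr}\bigl(U_\alpha\widehat f(\alpha)Q_\alpha\,\widehat{T_i}(\alpha)^*\bigr)=\overline{c^i_\alpha}\,\mathrm{tr}\bigl(U_\alpha\widehat f(\alpha)\bigr),
\]
with no residual $Q_\alpha$ and no factor $\tfrac{1}{n_\alpha}$. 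Taking the supremum over unitaries $U_\alpha$ (allowed to depend on $\alpha$ and on $i$) then gives precisely $|c^i_\alpha|\,\mathrm{tr}(|\widehat f(\alpha)|)$, not a $Q_\alpha$-weighted variant, and the rest proceeds as you outline.
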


\begin{proof}
Let $(a_{\alpha})_{\alpha\in \mathrm{Irr}(\g)}$ be a sequence with $\displaystyle \sum_{\alpha\in \mathrm{Irr}(\g)}|a_{\alpha}|^2\leq 1$. Then we have a family $\left \{T_i\right\}_i \subseteq \mathrm{Pol}(\g)\cap K\cdot \mathrm{Ball}(L^{\infty}(\g))$ coming from Lemma \ref{lem4}. Let us write $T_i$ as $\displaystyle \sum_{\alpha\in \mathrm{Irr}(\g)}c^i_{\alpha}\chi_{\alpha}$. Then $\widehat{T_i}(\alpha)Q_{\alpha}=\displaystyle \frac{c^i_{\alpha}}{d_{\alpha}}\mathrm{Id}_{n_{\alpha}}$ for all $\alpha\in \mathrm{Irr}(\g)$. Therefore, using Proposition \ref{lem2}, there exists a universal constant $K'>0$ such that 
\begin{align*}
\infty> K'&\geq \sup_U \sup_i \langle f_U,T_i^* \rangle_{L^1(\g),L^{\infty}(\g)}\\
&=\sup_U \sup_i \sum_{\alpha\in \mathrm{Irr}(\g)}d_{\alpha}\mathrm{tr}(U_{\alpha}\widehat{f}(\alpha)Q_{\alpha}\widehat{T_i}(\alpha)^*)\\
&= \sup_U \sup_i \sum_{\alpha\in \mathrm{Irr}(\g)} \overline{c_{\alpha}^i} \mathrm{tr}(U_{\alpha}\widehat{f}(\alpha))\\
&=\sup_i \sum_{\alpha\in \mathrm{Irr}(\g)}\left |c_{\alpha}^i\right |\mathrm{tr}(\left | \widehat{f}(\alpha)\right |)\\
&\geq  \sum_{\alpha\in \mathrm{Irr}(\g)}\left |c_{\alpha}\right |\mathrm{tr}(\left | \widehat{f}(\alpha)\right |)\\
&\geq \sum_{\alpha\in \mathrm{Irr}(\g)}\left |a_{\alpha}\right |\mathrm{tr}(\left | \widehat{f}(\alpha)\right |).
\end{align*}
The conclusion can be reached as stated above since $(a_{\alpha})$ is arbitrary.
\end{proof}

\begin{corollary}\label{cor-central}
Let $\g$ be a compact quantum group such that $\displaystyle \inf_{\alpha\in \mathrm{Irr}(\g)}h(|\chi_{\alpha}|)>0$ and fix a central element $\displaystyle f\sim \sum_{\alpha\in \mathrm{Irr}(\g)}c_{\alpha}\chi_{\alpha}\in M(\g)$. If all random Fourier series $f_U\sim \displaystyle \sum_{\alpha\in \mathrm{Irr}(\g)}d_{\alpha}\mathrm{tr}(U_{\alpha}\widehat{f}(\alpha)Q_{\alpha}u^{\alpha})$ are in $M(\g)$ for all $U=\displaystyle (U_{\alpha})_{\alpha\in \mathrm{Irr}(\g)}\in \mathcal{U}$, then we have
\[\sum_{\alpha\in \mathrm{Irr}(\g)}d_{\alpha}\mathrm{tr}(Q_{\alpha}\widehat{f}(\alpha)^*\widehat{f}(\alpha))=\sum_{\alpha\in \mathrm{Irr}(\g)} \left  |c_{\alpha}\right |^2<\infty.\]
\end{corollary}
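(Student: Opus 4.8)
The plan is to reduce the statement entirely to Theorem~\ref{thm-central} by computing the Fourier coefficients of the central element $f$ explicitly.

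First I would identify $\widehat{f}(\alpha)$. Since $f\sim\sum_{\alpha\in\mathrm{Irr}(\g)}c_{\alpha}\chi_{\alpha}$ is central and $\chi_{\alpha}=\sum_{i=1}^{n_{\alpha}}u^{\alpha}_{i,i}$, comparing with the Fourier expansion $f\sim\sum_{\alpha}d_{\alpha}\mathrm{tr}(\widehat{f}(\alpha)Q_{\alpha}u^{\alpha})=\sum_{\alpha}\sum_{i,j}d_{\alpha}(\widehat{f}(\alpha)Q_{\alpha})_{i,j}u^{\alpha}_{j,i}$ forces $(\widehat{f}(\alpha)Q_{\alpha})_{i,j}=\tfrac{c_{\alpha}}{d_{\alpha}}\delta_{i,j}$, exactly as recorded in the proof of Theorem~\ref{thm-central}; hence $\widehat{f}(\alpha)=\tfrac{c_{\alpha}}{d_{\alpha}}Q_{\alpha}^{-1}$ for every $\alpha\in\mathrm{Irr}(\g)$.

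Next, using that $Q_{\alpha}$ may be taken diagonal and positive (so $Q_{\alpha}^{-1}\geq 0$ and $|Q_{\alpha}^{-1}|=Q_{\alpha}^{-1}$), I would evaluate both quantities in the statement with the help of the normalization $\mathrm{tr}(Q_{\alpha}^{-1})=d_{\alpha}$. On one hand $\widehat{f}(\alpha)^{*}\widehat{f}(\alpha)=\tfrac{|c_{\alpha}|^{2}}{d_{\alpha}^{2}}Q_{\alpha}^{-2}$, so $d_{\alpha}\mathrm{tr}(Q_{\alpha}\widehat{f}(\alpha)^{*}\widehat{f}(\alpha))=\tfrac{|c_{\alpha}|^{2}}{d_{\alpha}}\mathrm{tr}(Q_{\alpha}^{-1})=|c_{\alpha}|^{2}$; summing over $\alpha$ gives the claimed identity $\sum_{\alpha}d_{\alpha}\mathrm{tr}(Q_{\alpha}\widehat{f}(\alpha)^{*}\widehat{f}(\alpha))=\sum_{\alpha}|c_{\alpha}|^{2}$ as an equality of extended reals, with no convergence yet asserted. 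On the other hand $\mathrm{tr}(|\widehat{f}(\alpha)|)=\tfrac{|c_{\alpha}|}{d_{\alpha}}\mathrm{tr}(Q_{\alpha}^{-1})=|c_{\alpha}|$, hence $\mathrm{tr}(|\widehat{f}(\alpha)|)^{2}=|c_{\alpha}|^{2}$.

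Finally, since $\g$ satisfies $\inf_{\alpha}h(|\chi_{\alpha}|)>0$ and all random Fourier series $f_{U}$ lie in $M(\g)$, Theorem~\ref{thm-central} applies directly and gives $\sum_{\alpha}\mathrm{tr}(|\widehat{f}(\alpha)|)^{2}<\infty$; combined with the two computations above this is precisely $\sum_{\alpha}d_{\alpha}\mathrm{tr}(Q_{\alpha}\widehat{f}(\alpha)^{*}\widehat{f}(\alpha))=\sum_{\alpha}|c_{\alpha}|^{2}<\infty$. There is no genuine obstacle here: the corollary is a specialization of Theorem~\ref{thm-central}, and the only point requiring care is the bookkeeping of $\widehat{f}(\alpha)$ for central $f$ together with $\mathrm{tr}(Q_{\alpha}^{-1})=d_{\alpha}$, which is exactly what makes the $Q_{\alpha}$-weighted $\ell^{2}$-quantity and $\sum_{\alpha}\mathrm{tr}(|\widehat{f}(\alpha)|)^{2}$ coincide.
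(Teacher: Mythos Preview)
Your proof is correct and is exactly the intended argument: the paper states Corollary~\ref{cor-central} without proof, treating it as an immediate specialization of Theorem~\ref{thm-central} via the computation $\widehat{f}(\alpha)Q_{\alpha}=\tfrac{c_{\alpha}}{d_{\alpha}}\mathrm{Id}_{n_{\alpha}}$ (which already appears in the proof of Theorem~\ref{thm-central}). Your bookkeeping with $\mathrm{tr}(Q_{\alpha}^{-1})=d_{\alpha}$ to identify both $d_{\alpha}\mathrm{tr}(Q_{\alpha}\widehat{f}(\alpha)^{*}\widehat{f}(\alpha))$ and $\mathrm{tr}(|\widehat{f}(\alpha)|)^{2}$ with $|c_{\alpha}|^{2}$ is precisely what is needed.
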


\begin{proposition}
The quantum groups listed in the following list are known to satisfy the condition of Corollary \ref{cor-central}:
\begin{itemize}
\item Free orthogonal quantum groups $O_F^+$
\item Quantum automorphism group $\g_{aut}(B,\psi)$ with a $\delta$-form $\psi$
\item Drinfeld-Jimbo $q$ deformations $G_q$ with $0<q<1$
\end{itemize}
\end{proposition}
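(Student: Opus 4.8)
The plan is to verify, in each case, the single hypothesis $\displaystyle\inf_{\alpha\in\mathrm{Irr}(\g)}h(|\chi_\alpha|)>0$ of Corollary \ref{cor-central}; granting this, Corollary \ref{cor-central} applies verbatim and there is nothing more to prove.

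First I would isolate a general reduction that makes the three cases uniform. Recall from the preceding subsection that $h$ restricts to a faithful normal \emph{tracial} state on the commutative von Neumann algebra $N$ generated by $\{\chi_\alpha\}_{\alpha\in\mathrm{Irr}(\g)}$; moreover $\{\chi_\alpha\}$ is an orthonormal system in $L^2(N,h)$, one has $\chi_\alpha^*=\chi_{\overline\alpha}$ and $\chi_\alpha\chi_\beta=\sum_\gamma N_{\alpha\beta}^\gamma\chi_\gamma$ with $h(\chi_\gamma)$ equal to $1$ when $\gamma$ is trivial and $0$ otherwise, where the $N_{\alpha\beta}^\gamma$ are the fusion multiplicities. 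The unital $*$-subalgebra $\mathrm{span}\{\chi_\alpha\}$ is $\sigma$-weakly dense in $N$ and its image is $L^2$-dense in $L^2(N,h)$, so these data determine the tracial $*$-algebra $(\mathrm{span}\{\chi_\alpha\},h)$ and hence, by GNS completion, the pair $(N,h|_N)$ up to trace preserving isomorphism. In particular the scalars $h(|\chi_\alpha|)$ depend only on the fusion ring of $\g$ together with its conjugation. Therefore, if $\g$ shares its fusion rules (with conjugation) with a compact connected Lie group $G$, then $(N,h|_N)$ is trace preservingly isomorphic to the algebra of central functions on $G$ with the normalized Haar integral, via $\chi_\alpha\mapsto\chi_\lambda$, the matching irreducible character of $G$; in particular $h(|\chi_\alpha|)=\|\chi_\lambda\|_{L^1(G)}$.

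Next I would match the examples: $O_F^+$ has the fusion rules of $\mathrm{SU}(2)$, the quantum automorphism group $\g_{aut}(B,\psi)$ with a $\delta$-form has those of $\mathrm{SO}(3)$ (both facts are due to Banica), and $G_q$ with $0<q<1$ has, by the Drinfeld--Jimbo--Lusztig theory, exactly the fusion rules of the corresponding classical compact connected Lie group $G$. Hence in every listed case the hypothesis of Corollary \ref{cor-central} is equivalent to the purely classical assertion $\displaystyle\inf_\lambda\|\chi_\lambda\|_{L^1(G)}>0$ for the relevant $G$. For $G=\mathrm{SU}(2)$ this is elementary: if $\chi_k$ denotes the character of the $(k+1)$-dimensional irreducible, then by Weyl integration $\|\chi_k\|_{L^1(\mathrm{SU}(2))}=\frac{2}{\pi}\int_0^\pi|\sin((k+1)\theta)|\sin\theta\,d\theta$, which is strictly positive for each $k$ and converges to $\frac{8}{\pi^2}$ as $k\to\infty$ since $|\sin((k+1)\theta)|$ equidistributes with mean $\frac{2}{\pi}$; the infimum over $k$ is therefore positive. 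The computation for $\mathrm{SO}(3)$ is identical, restricted to integer spins, with the same positive limit.

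The main obstacle is the last ingredient for a general compact connected Lie group $G$, which is what is needed for $G_q$: namely $\displaystyle\inf_\lambda\|\chi_\lambda\|_{L^1(G)}>0$. In higher rank there is no reduction to one trigonometric integral, so one must either invoke the known lower estimates for $L^1$-norms of irreducible characters of compact connected Lie groups (see \cite{Wa17} and the references therein) or argue directly from the Weyl character formula, writing $\|\chi_\lambda\|_{L^1(G)}=|W|^{-1}\int_T|A_{\lambda+\rho}|\,|A_\rho|\,dt$ and extracting a bound uniform in the dominant weight $\lambda$ by using that $A_{\lambda+\rho}$ is a $\pm1$ combination of $|W|$ characters of the maximal torus (so $\|A_{\lambda+\rho}\|_{L^2(T)}^2=|W|$ while $\|A_{\lambda+\rho}\|_{L^\infty(T)}\le|W|$) together with the fixed vanishing behaviour of $A_\rho$ along the walls. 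Granting that classical fact, the reduction above completes the verification for $G_q$ and hence establishes the proposition.
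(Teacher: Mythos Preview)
Your approach is essentially the same as the paper's: reduce via a trace-preserving isomorphism of the character algebra (determined by the fusion rules with conjugation) to the classical statement $\inf_\lambda\|\chi_\lambda\|_{L^1(G)}>0$ for a compact connected Lie group, and then match $O_F^+$, $\g_{aut}(B,\psi)$, $G_q$ with $SU(2)$, $SO(3)$, $G$ respectively. Two small points: the paper does not assert that $N$ is commutative in general (it only shows $h$ is tracial on $N$; commutativity happens to hold in the three listed cases because the fusion rings coincide with those of classical groups), and for the general Lie group bound the paper simply invokes \cite{Pr75} rather than attempting a direct Weyl-formula argument---your citation of \cite{Wa17} for this is not the right place, and your sketched direct argument is left incomplete, so you should cite the classical result as the paper does.
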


\begin{proof}
Let $\g_1$ and $\g_2$ be compact quantum groups. If there exists a bijective map $\Phi:\mathrm{Irr}(\g_1)\rightarrow \mathrm{Irr}(\g_2)$ such that
\[\Phi(\pi_1\otimes \pi_2)=\Phi(\pi_1)\otimes \Phi(\pi_2)~\mathrm{and~}\Phi(\oplus_{i=1}^n \pi_i)=\oplus_{i=1}^n \Phi(\pi_i)\]
for all $\pi_i\in \mathrm{Irr}(\g_1)$ and $n\geq 1$, then the map $\Phi$ extends to a bijective $*$-homomorphism $\Phi: N_1\rightarrow N_2$ where $N_j$ is the weak $*$-closure of $\mathrm{span}\left \{\chi^j_{\alpha}\right\}_{\alpha\in \mathrm{Irr}(\g_j)}$ in $L^{\infty}(\g_j)$ for each $j=1,2$. Also, recall that the Haar states $h_j$ are tracial on $N_j$ respectiely. Moreover, $\Phi$ is trace-preserving by repeating the proofs of [Proposition 6.7, \cite{Wa17}] and [Lemma 4.7, \cite{Yo17}]. Therefore, we can conclude that
\[h_1(\left |\chi^1_{\alpha}\right |)=h_2(\left |\chi^2_{\Phi(\alpha)}\right |)~\mathrm{for~all~}\alpha\in \mathrm{Irr}(\g).\]

On the other hand, in \cite{Pr75}, it is shown that
\[\inf_{\pi\in \mathrm{Irr}(G)}\left \|\chi_{\pi}\right \|_{L^1(G)}>0\]
for any compact connected Lie group. 

The existence of the bijective map $\Phi:\mathrm{Irr}(\g)\rightarrow \mathrm{Irr}(G)$ is known for $(\g,G)=(O_N^+,SU(2))$, $(\g_{aut}(B,\psi),SO(3))$ and $(G_q,G)$. Refer to \cite{Br13}, \cite{NeTu13} and \cite{Ti08} for details.

\end{proof}

\section{Application to complete representability problem}\label{sec:application}

In the category of Banach algebras, $C^*$-algebras are in a special position. One of typical examples that are not $C^*$-algebras is the convolution algebra $L^1(G)$ of a locally compact group. Moreover, it turned out that $L^1(G)$ is isomorphic to a closed subalgebra of $B(H)$ for some Hilbert $H$ as Banach algebras if and only if $G$ is finite. This fact is based on the study of the Arens irregularity for convolution algebras $L^1(G)$ \cite{Yo73}, which shows $L^1(G)$ is Arens regular only if $G$ is finite.

In the framework of locally compact quantum groups, the Fourier algebra $A(G)$ is understood as a dual object of the convolution algebra $L^1(G)$ in view of the generalized Pontrjagin duality. Very recently, it has shown that the Fourier algebra is {\it completely isomorphic} to a closed subalgebra of $B(H)$ for some Hilbert space if and only if $G$ is finite (\cite{LeYo17} for discrete groups and \cite{LeSaSp16} for general cases). Here, complete isomorphism is the natural isomorphism in the category of operator spaces. Some terminologies of operator space theory required in this section will be explained in the below. 

To my knowledge, prior to \cite{LeYo17}, all conclusions in this direction were based on Arens regularity studies. In the quantum group setting, the situation is similar. Thanks to \cite{HuNeRu12} investigating Arens irregularity for convolution algebras $L^1(\g)$ of locally compact quantum groups, we know that the convolution algebra $L^1(\g)$ is completely isomorphic to a closed subalgebra of $B(H)$ if and only if $L^{\infty}(\g)$ is finite dimensional for $\g$ a {\it co-amenable} compact quantum group.

Our contribution in this direction is the following fact, which removes the additional condition of {\it co-amenability} in the category of compact quantum groups.

\begin{theorem}\label{main6}
Let $\g=(A,\Delta)$ be a compact quantum group. Then $L^1(\g)$ is completely representable as an operator algebra, i.e. $L^1(\g)$ is completely isomorphic to a closed subalgebra of $B(H)$ for some Hilbert space $H$ if and only if $A$ is finite dimensional. 
\end{theorem}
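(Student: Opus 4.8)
The plan is to prove the two directions separately, with the non-trivial direction being that complete representability of $L^1(\g)$ forces $A$ to be finite dimensional. The easy direction is immediate: if $A=L^\infty(\g)$ is finite dimensional, then $L^1(\g)$ is a finite dimensional operator algebra, and any finite dimensional operator algebra embeds completely isomorphically into some $B(H)$ (indeed it already sits inside $B(L^2(\g))$ after passing to the universal operator algebra structure, or one simply invokes that $L^1(\g)\cong L^\infty(\g)$ as operator spaces when $A$ is finite dimensional and this carries an operator algebra structure). So from now on I assume $L^1(\g)$ is completely isomorphic to a closed subalgebra of $B(H)$ and aim for a contradiction if $A$ is infinite dimensional.

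The key idea is to exploit the convolution structure together with Theorem \ref{thm-nonKac} (and, where available, Theorem \ref{thm-central}/Corollary \ref{cor-central}). Recall that $L^1(\g)$ has a completely bounded multiplication, and in fact each left translation / convolution by a fixed element is completely bounded. If $L^1(\g)$ is completely isomorphic to an operator algebra, then its multiplier algebra behaves well; in particular, one can test the operator algebra structure against the random unitaries $U=(U_\alpha)_{\alpha\in\mathrm{Irr}(\g)}\in\mathcal{U}$. The point is that each $U$ induces a completely bounded map on $L^1(\g)$ (a Fourier multiplier with symbol $(U_\alpha)_\alpha$, bounded on $\ell^\infty(\widehat\g)$ hence completely bounded as a multiplier), and composing with the complete isomorphism into $B(H)$ one controls $\|f_U\|$ uniformly in $U$. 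Concretely: if $L^1(\g)\hookrightarrow B(H)$ completely isomorphically, then for a suitable $f$ (e.g. a carefully chosen element with prescribed Fourier data, or an approximate identity truncation) all the random Fourier series $f_U$ stay in $M(\g)$ with uniformly bounded norm, which by Theorem \ref{thm-nonKac} forces $\sum_\alpha \frac{d_\alpha}{n_\alpha}\mathrm{tr}(Q_\alpha \widehat f(\alpha)^*\widehat f(\alpha))<\infty$. Choosing $f$ so that this sum would have to diverge when $A$ is infinite dimensional yields the contradiction. The cleanest route is probably: show that complete representability of $L^1(\g)$ implies $\ell^\infty(\widehat\g)$ acts as completely bounded multipliers on a single fixed non-$L^2$ element, then apply Proposition \ref{lem2} to land in the hypothesis of Theorem \ref{thm-nonKac}, and finally observe that $\sum_\alpha d_\alpha^2 = \infty$ whenever $A$ is infinite dimensional (since $\dim \mathrm{Pol}(\g)=\sum_\alpha n_\alpha^2 = \infty$ and $d_\alpha\geq n_\alpha$), arranging $\widehat f(\alpha)$ to be a rank-one-type matrix so the relevant series reduces to $\sum_\alpha \frac{d_\alpha}{n_\alpha}\cdot(\text{something}\gtrsim n_\alpha)$ which diverges.

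More precisely, I would build $f$ as follows. Pick scalars $\lambda_\alpha>0$ with $\sum_\alpha \lambda_\alpha^2<\infty$ but $\sum_\alpha \frac{d_\alpha}{n_\alpha}\lambda_\alpha^2 \cdot (\text{normalizing factor}) = \infty$ — this is possible exactly when $A$ is infinite dimensional, by choosing the $\lambda_\alpha$ to decay just slowly enough relative to the (unbounded) sequence $d_\alpha/n_\alpha$ or $n_\alpha$. Set $\widehat f(\alpha)$ to be an appropriate multiple of a fixed-rank matrix (say proportional to $E_{11}$ after diagonalizing $Q_\alpha$, scaled so that $f\in L^1(\g)$, i.e. so that $\sum_\alpha d_\alpha\,\mathrm{tr}(|\widehat f(\alpha)Q_\alpha|)<\infty$). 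Complete representability of $L^1(\g)$ should guarantee, via the multiplier picture above, that $f_U\in M(\g)$ for all $U\in\mathcal U$ (the operator algebra norm dominates, so translating by the unitaries $U$ keeps us inside $M(\g)$). Then Theorem \ref{thm-nonKac} applies and contradicts the divergence built into the $\lambda_\alpha$. The main obstacle is the first implication: rigorously showing that "$L^1(\g)$ completely isomorphic to a subalgebra of $B(H)$" forces the random-translates $f_U$ of a well-chosen $f\in L^1(\g)$ to remain in $M(\g)$ uniformly. This needs a clean argument that the Fourier multipliers with symbol in $\mathrm{Ball}(\ell^\infty(\widehat\g))$ extend to completely bounded maps on the operator algebra $L^1(\g)$ — plausibly because they are complete contractions on $\ell^\infty(\widehat\g)=M(\widehat{\g})$-style multipliers and the operator algebra structure is compatible with the module action — or else a duality argument relating the operator algebra structure on $L^1(\g)$ to a bounded bilinear map and then invoking Arens-regularity-type obstructions as in \cite{HuNeRu12}, now freed of co-amenability by the random Fourier series input. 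Once that bridge is in place, the rest is the bookkeeping of choosing $f$ and invoking Theorem \ref{thm-nonKac} (or Corollary \ref{cor-central} for the central case, which already suffices to kill the co-amenable-free examples such as $O_N^+$ and $\g_{aut}(B,\psi)$).
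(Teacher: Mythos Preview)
Your high-level strategy is the right one --- reduce to the hypothesis of Proposition~\ref{lem2} and then invoke Theorem~\ref{thm-nonKac} --- and this is exactly what the paper does. But the proposal has a genuine gap at precisely the place you flag as ``the main obstacle'': you never establish why complete representability of $L^1(\g)$ forces the multiplier map $\ell^\infty(\widehat{\g})\to M(\g)$, $B\mapsto f_B$, to be bounded for some specific $f$. Your suggested mechanism (that unitary Fourier multipliers extend to completely bounded maps on the operator algebra $L^1(\g)$) does not follow from the operator algebra structure: the product on $L^1(\g)$ is convolution, and $f\mapsto f_U$ is not left or right multiplication by any element, so the multiplier algebra of $L^1(\g)$ does not see these maps. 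Nor does Arens regularity on its own give what you need without co-amenability, which is the whole point of removing that hypothesis.

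The paper's bridge is different and concrete. It passes through Blecher's criterion: $L^1(\g)$ is completely representable if and only if $\Delta:L^\infty(\g)\to L^\infty(\g)\otimes_{eh}L^\infty(\g)$ is completely bounded, where the extended Haagerup tensor product is realized inside $CB^\sigma(B(L^2(\g)),B(L^2(\g)))$. One then \emph{evaluates} $\Delta(f)$ at a family of contractions $T_B\in B(L^2(\g))$ (Lemma~\ref{lem7}), chosen so that $\langle(\Delta(f))(T_B)1_A,1_A\rangle$ equals $\sum_\alpha n_\alpha\mathrm{tr}(\widehat f(\alpha)Q_\alpha B_\alpha)$. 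Complete boundedness of $\Delta$ then gives $|\sum_\alpha n_\alpha\mathrm{tr}(\widehat f(\alpha)Q_\alpha B_\alpha)|\leq \|\Delta\|_{cb}\|f\|_{L^\infty(\g)}$ uniformly in $B\in\mathrm{Ball}(\ell^\infty(\widehat\g))$, i.e.\ $f\mapsto (\tfrac{n_\alpha}{d_\alpha}\widehat f(\alpha))_\alpha$ is bounded $C_r(\g)\to\ell^1(\widehat\g)$. Dualizing, the map $\Psi:\ell^\infty(\widehat\g)\to M(\g)$, $A\mapsto\sum_\alpha d_\alpha\mathrm{tr}(A_\alpha\cdot\tfrac{n_\alpha}{d_\alpha}\mathrm{Id}_{n_\alpha}\cdot Q_\alpha u^\alpha)$, is bounded. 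So the ``$f$'' that drops out has $\widehat f(\alpha)=\tfrac{n_\alpha}{d_\alpha}\mathrm{Id}_{n_\alpha}$ --- no rank-one choices or delicate decay rates are needed --- and Theorem~\ref{thm-nonKac} then yields $\sum_\alpha \tfrac{d_\alpha}{n_\alpha}\mathrm{tr}(Q_\alpha\cdot\tfrac{n_\alpha^2}{d_\alpha^2}\mathrm{Id}_{n_\alpha})=\sum_\alpha n_\alpha<\infty$, forcing $A$ finite dimensional. The missing ingredient in your argument is exactly this Blecher/extended-Haagerup step together with the explicit operators $T_B$.
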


An abstarct operator space is a vector space $E$ equipped with a matrical norm structure on $M_n(E)=M_n\otimes E$ for which
\begin{enumerate}
\item $\left \|v_1\oplus v_2\right\|_{M_{m+n}(E)}\leq \max \left \{\left \|v_1\right\|_m,\left \|v_2\right\|_n\right\}$ for all $v_1\in M_m(E)$ and $v_2\in M_n(V)$ and
\item $\left \|avb\right\|_{M_m(E)}\leq \left \|a\right\|_{M_{m,n}} \left \|v\right\|_{M_n(E)} \left \|b\right\|_{M_{n,m}}$ for all $a\in M_{m,n},b\in M_{n,m}$ and $v\in M_n(E)$.
\end{enumerate}

We say that a linear map $L:E\rightarrow F$ between the two operator spaces $E$ and $F$ is completely bounded if $\displaystyle \sup_{n\in \n}\left \|\mathrm{id}_n\otimes L:M_n(E)\rightarrow M_n(F)\right\|<\infty$.

The two operator spaces that are noted in this section are the convolution algebra $L^1(\g)$ and the extended Haagerup tensor product space $L^{\infty}(\g)\otimes_{eh} L^{\infty}(\g)$. Each spaces has a natural operator space structure. Let us present equivalent description of those operator spaces.

First of all, for $F=(f_{i,j})_{i,j=1}^{n} \in M_n(L^1(\g))$, the natural matricial norm structure is given by
\[\left \|F\right \|_{M_n(L^1(\g))}=\sup_{m\in \n}\sup_{X}\left \| (\langle f_{i,j},x_{k,l}\rangle_{L^1(\g),L^{\infty}(\g)})_{i,j=1}^n ~_{k,l=1}^m \right\|_{M_{mn}},\]
where the supremum runs over all $X=(x_{k,l})_{k,l=1}^m\in \mathrm{Ball}(M_m(L^{\infty}(\g)))$ under the identification $M_m(L^{\infty}(\g))\subseteq M_m(B(L^2(\g)))\cong B(L^2(\g)\oplus \cdots \oplus L^2(\g))$.

Secondly, 
\[L^{\infty}(\g)\otimes_{eh}L^{\infty}(\g)=\left \{\sum_i a_i\otimes b_i : ~\sum_i a_ia_i^*~\mathrm{and~} \sum_{i}b_i^*b_i~\mathrm{converges}\right\}\]
with respect to the weak $*$-topologies and the matricial norm structure comes from the completely isometric embedding into $CB^{\sigma}(B(L^2(\g)),B(L^2(\g)))$. More precisely, the matricial norm for $(\displaystyle \sum_k a^{i,j}_k\otimes b^{i,j}_k)_{i,j=1}^n\in M_n\otimes (L^{\infty}(\g)\otimes_{eh} L^{\infty}(\g))$ is given by
\[\sup_{m\in \n} \sup_{T=(T^{s,t})_{s,t=1}^m}  \left \| (\sum_k a_k^{i,j}T^{s,t}b^{i,j})_{i,j=1}^n ~_{s,t=1}^m\right \|_{M_{mn}(B(L^2(\g)))},\]
where the supremum runs over all $T=(T^{s,t})_{1\leq s,t\leq m}\in \mathrm{Ball}(M_m(B(L^2(\g))))$.

Note that the restricted comultiplication $\Delta\Bigr|_{\mathrm{Pol}(\g)}$ extends to a normal $*$-homomorphism $\Delta:L^{\infty}(\g)\rightarrow L^{\infty}(\g)\overline{\otimes}L^{\infty}(\g)$ and it induces the natural Banach algebraic structure on $L^1(\g)$. For $f_1,f_2\in L^1(\g)$, we define the convolution product of $f_1$ and $f_2$ by
\[\langle f_1*f_2,a\rangle_{L^1(\g),L^{\infty}(\g)}=(f_1\otimes f_1)(\Delta(a))~\mathrm{for~all~}a\in L^{\infty}(\g).\]

Moreover, the convolution product extends to a completely contractive map $m=\Delta_*:L^1(\g)\widehat{\otimes} L^1(\g)\rightarrow L^1(\g)$ where $\widehat{\otimes}$ is the projective tensor product in the category of operator spaces. Refer to \cite{Pi03} or \cite{EfRu00} for details. The result of \cite{Bl95}, which actually covers general completely contractive Banach algebras, is written as follows in our setting:

\begin{proposition}

The convolution algebra $L^1(\g)$ is completely isomorphic to a closed subaglebra of $B(H)$ for some Hilbert space $H$ if and only if the comultiplication $\Delta:L^{\infty}(\g)\rightarrow L^{\infty}(\g)\otimes_{eh}L^{\infty}(\g)$ is completely bounded.
\end{proposition}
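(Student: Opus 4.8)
The plan is to recognize the stated equivalence as Blecher's completely bounded characterization of operator algebras \cite{Bl95}, transported across operator space duality. Recall that $L^1(\g)$ carries a completely contractive multiplication $m=\Delta_*:L^1(\g)\widehat{\otimes}L^1(\g)\to L^1(\g)$ induced by the comultiplication. By Blecher's criterion, $L^1(\g)$ is completely isomorphic to a closed subalgebra of some $B(H)$ if and only if this bilinear multiplication is completely bounded in the strong bilinear sense, equivalently if and only if it induces a completely bounded map $m_h:L^1(\g)\otimes_h L^1(\g)\to L^1(\g)$ on the Haagerup tensor product. Since the projective operator space norm dominates the Haagerup norm, a map that is completely bounded on $\otimes_h$ is automatically completely bounded on $\widehat{\otimes}$; thus the Haagerup condition is a genuine strengthening of the complete contractivity of $m$ already recorded, and it is precisely this strengthening that I aim to reformulate in terms of $\Delta$.

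First I would pass to adjoints. The Banach-space adjoint of $m_h$ is a map $(m_h)^{*}:L^{\infty}(\g)\to (L^1(\g)\otimes_h L^1(\g))^{*}$, and the standard duality for the Haagerup tensor product identifies the target completely isometrically with the weak$^{*}$-Haagerup tensor product of the duals, which for the von Neumann algebra $L^{\infty}(\g)$ coincides with the extended Haagerup tensor product $L^{\infty}(\g)\otimes_{eh}L^{\infty}(\g)$ appearing in the statement. Next I would verify that under this identification $(m_h)^{*}$ is exactly the comultiplication: for $a\in L^{\infty}(\g)$ and $f_1,f_2\in L^1(\g)$ one has $\langle (m_h)^{*}(a),f_1\otimes f_2\rangle=\langle a,f_1*f_2\rangle=\langle \Delta(a),f_1\otimes f_2\rangle$, and since elementary tensors are dense this forces $(m_h)^{*}=\Delta$. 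Note that this computation also shows, for free, that $\Delta(a)$ defines an element of the extended Haagerup tensor product (a priori $\Delta$ is only valued in the spatial tensor product $L^{\infty}(\g)\overline{\otimes}L^{\infty}(\g)$), since $(m_h)^{*}(a)$ lives in $(L^1(\g)\otimes_h L^1(\g))^{*}$ by construction.

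Finally, I would invoke the duality of complete boundedness: a linear map between operator spaces is completely bounded if and only if its adjoint is, with equal cb norms. Hence $m_h$ is completely bounded precisely when $\Delta:L^{\infty}(\g)\to L^{\infty}(\g)\otimes_{eh}L^{\infty}(\g)$ is completely bounded. Chaining this with Blecher's criterion from the first step yields the asserted equivalence.

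The main obstacle I anticipate is the identification used in the second step: one must check that the operator space structure the paper places on $L^{\infty}(\g)\otimes_{eh}L^{\infty}(\g)$, via the completely isometric embedding into $CB^{\sigma}(B(L^2(\g)),B(L^2(\g)))$ with $\sum_i a_i\otimes b_i$ acting as $T\mapsto \sum_i a_i T b_i$, really coincides with the one it inherits as the dual of $L^1(\g)\otimes_h L^1(\g)$, i.e. that the weak$^{*}$-Haagerup and extended Haagerup tensor products agree in this setting and that the two pairings are mutually compatible. Once this compatibility is established, the remaining steps are formal applications of tensor-product duality and cb-duality.
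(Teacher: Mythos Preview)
Your proposal is correct and matches the paper's approach: the paper does not give an independent proof of this proposition but simply attributes it to Blecher \cite{Bl95}, stating it as a specialization of his general result on completely contractive Banach algebras. What you have written is exactly the standard unpacking of that citation---Blecher's Haagerup-tensor-product criterion combined with the duality $(L^1(\g)\otimes_h L^1(\g))^*\cong L^{\infty}(\g)\otimes_{eh}L^{\infty}(\g)$ and the identification $(m_h)^*=\Delta$---and the compatibility of the two descriptions of $\otimes_{eh}$ you flag is indeed well known for von Neumann algebra preduals.
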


\begin{lemma}\label{lem7}
Given $B=(B_{\alpha})_{\alpha\in \mathrm{Irr}(\g)}\in \mathrm{Ball}(\ell^{\infty}(\widehat{\g}))$, define an operator $T_B\in B(L^2(\g))$ by $u^{\alpha}_{j,i}\mapsto \displaystyle [Q_{\alpha}^{-1}(u^{\alpha})^*B_{\alpha}]_{j,i}=\sum_{p=1}^{n_{\alpha}}(Q_{\alpha}^{-1})_{j,j}(u^{\alpha}_{p,j})^*(B_{\alpha})_{p,i}$. Then $T_B$ is a contraction.
\end{lemma}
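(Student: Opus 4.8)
The plan is to realize $T_B$ as an operator built from multiplication operators coming from the co-multiplication and to show it is a contraction by an explicit computation of $T_B^*T_B$ on the orthonormal-type basis $\{u^{\alpha}_{j,i}\}$ of $L^2(\g)$, using the two Schur orthogonality relations stated in Section \ref{subsec:rep}. First I would fix the correct inner products of the matrix coefficients: from $h((u^{\beta}_{s,t})^*u^{\alpha}_{i,j})=\delta_{\alpha,\beta}\delta_{j,t}(Q_{\alpha}^{-1})_{i,s}/\mathrm{tr}(Q_{\alpha})$ one reads off $\langle \Lambda(u^{\alpha}_{i,j}),\Lambda(u^{\beta}_{s,t})\rangle_{L^2(\g)}=\delta_{\alpha,\beta}\delta_{j,t}(Q_{\alpha}^{-1})_{i,s}/d_{\alpha}$, so the $u^{\alpha}_{i,j}$ are orthogonal across distinct columns $j$ and within a fixed column $j$ the Gram matrix in the index $i$ is $Q_{\alpha}^{-1}/d_{\alpha}$; one also needs the companion relation $h(u^{\beta}_{s,t}(u^{\alpha}_{i,j})^*)=\delta_{\alpha,\beta}\delta_{i,s}(Q_{\alpha})_{j,t}/\mathrm{tr}(Q_{\alpha})$ for the $(u^{\alpha}_{p,j})^*$ appearing in the definition of $T_B$.

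Next I would compute $\|T_B \xi\|_{L^2(\g)}^2$ for $\xi=\Lambda(u^{\alpha}_{j,i})$, and more generally for $\xi$ a linear combination within one block $\alpha$; since $T_B$ clearly preserves each block $\mathrm{span}\{u^{\alpha}_{j,i}:1\le i,j\le n_{\alpha}\}$ and even each fixed column index (the image $[Q_{\alpha}^{-1}(u^{\alpha})^*B_{\alpha}]_{j,i}$ still carries column index $j$), it suffices to bound $T_B$ on each subspace $V_{\alpha,j}=\mathrm{span}\{(u^{\alpha}_{p,j})^*: p\}$ — wait, more precisely the domain block is indexed by $(j,i)$ and $T_B(u^{\alpha}_{j,i})=(Q_{\alpha}^{-1})_{j,j}\sum_p (u^{\alpha}_{p,j})^*(B_{\alpha})_{p,i}$. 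Using $h((u^{\alpha}_{p,j})^*\cdot u^{\alpha}_{p',j})$-type computations (for the second orthogonality relation one has $h(((u^{\alpha}_{p,j})^*)^*(u^{\alpha}_{p',j})^*)=h(u^{\alpha}_{p,j}(u^{\alpha}_{p',j})^*)=\delta_{p,p'}(Q_{\alpha})_{j,j}/d_{\alpha}$), the Gram matrix of $\{(u^{\alpha}_{p,j})^*\}_p$ is $\frac{(Q_{\alpha})_{j,j}}{d_{\alpha}}\mathrm{Id}$, while the Gram matrix of $\{u^{\alpha}_{j,i}\}_i$ (fixed $j$) is $\frac{(Q_{\alpha}^{-1})_{j,j}}{d_{\alpha}}\mathrm{Id}$ once we also use that $Q_{\alpha}$ is diagonal. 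Assembling these, $\|T_B(\sum_i c_i u^{\alpha}_{j,i})\|^2 = (Q_{\alpha}^{-1})_{j,j}^2 \cdot \frac{(Q_{\alpha})_{j,j}}{d_{\alpha}} \cdot \sum_p |(B_{\alpha}^t c)_p|^2 \le (Q_{\alpha}^{-1})_{j,j}^2 (Q_{\alpha})_{j,j} d_{\alpha}^{-1}\|B_{\alpha}\|^2 \|c\|^2$, whereas $\|\sum_i c_i u^{\alpha}_{j,i}\|^2 = \frac{(Q_{\alpha}^{-1})_{j,j}}{d_{\alpha}}\|c\|^2$; the ratio is $(Q_{\alpha}^{-1})_{j,j}(Q_{\alpha})_{j,j}\|B_{\alpha}\|^2 = \|B_{\alpha}\|^2\le 1$ since $Q_\alpha$ is diagonal, giving the desired contractivity after summing orthogonally over the blocks $(\alpha,j)$.

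The main obstacle I anticipate is bookkeeping: making sure the two Schur relations are applied with the indices in the right slots (the definition of $T_B$ mixes an adjointed matrix coefficient $(u^{\alpha}_{p,j})^*$ with the matrix $B_\alpha$ acting on the $i$-index), and genuinely using the hypothesis — inserted via \cite{Da10} — that each $Q_\alpha$ may be taken diagonal, so that $(Q_\alpha^{-1})_{j,j}(Q_\alpha)_{j,j}=1$. A secondary care point is that $L^2(\g)$ is the completion under a pre-inner product, so I would phrase the estimate on $\mathrm{Pol}(\g)$ first and then extend $T_B$ to $L^2(\g)$ by density and continuity; the orthogonality of distinct blocks $(\alpha,j)$ (from $\delta_{\alpha,\beta}\delta_{j,t}$ in the first relation, together with the matching orthogonality of the images across distinct $(\alpha,j)$ from the second relation) is what lets the global bound follow from the per-block bound.
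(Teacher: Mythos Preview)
Your proposal is correct and follows essentially the same approach as the paper: both compute $\|T_B v\|_{L^2(\g)}^2$ directly from the two Schur orthogonality relations (using that $Q_\alpha$ is diagonal) and compare with $\|v\|_{L^2(\g)}^2$, the only difference being that you make the block decomposition over $(\alpha,j)$ explicit while the paper packages the same computation as a trace inequality $\mathrm{tr}(Q_\alpha^{-1}C_\alpha^*B_\alpha^*B_\alpha C_\alpha)\le \mathrm{tr}(Q_\alpha^{-1}C_\alpha^*C_\alpha)$. One harmless slip: the coefficient vector in your display is $(B_\alpha c)_p=\sum_i (B_\alpha)_{p,i}c_i$, not $(B_\alpha^t c)_p$, but since only $\|B_\alpha\|\le 1$ is used this does not affect the argument.
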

\begin{proof}
Take $\displaystyle v=\sum_{\alpha\in \mathrm{Irr}(\g)}\sum_{i,j=1}^{n_{\alpha}}c^{\alpha}_{i,j}u^{\alpha}_{j,i}$. Then 
\[T_Bv=\displaystyle \sum_{\alpha\in \mathrm{Irr}(\g)}\sum_{j,p=1}^{n_{\alpha}}[\sum_{i=1}^{n_{\alpha}} c^{\alpha}_{i,j}(Q_{\alpha}^{-1})_{j,j}(B_{\alpha})_{p,i}](u^{\alpha}_{p,j})^*.\]

Also, put $C_{\alpha}=(c^{\alpha}_{i,j})_{1\leq i,j\leq n_{\alpha}}$. Then 
\begin{align*}
\left \|T_Bv\right\|_{L^2(\g)}^2&=\sum_{\alpha\in \mathrm{Irr}(\g)}\sum_{j,p=1}^{n_{\alpha}}|\sum_{i=1}^{n_{\alpha}} c^{\alpha}_{i,j}(Q_{\alpha}^{-1})_{j,j}(B_{\alpha})_{p,i}|^2\frac{(Q_{\alpha})_{j,j}}{d_{\alpha}}\\
&=\sum_{\alpha\in \mathrm{Irr}(\g)}\sum_{j,p,i,i'=1}^{n_{\alpha}}(Q_{\alpha}^{-1})_{j,j}\overline{c^{\alpha}_{i,j}(B_{\alpha})_{p,i}}c^{\alpha}_{i',j}(B_{\alpha})_{p,i'}\frac{1}{d_{\alpha}}\\
&=\sum_{\alpha\in \mathrm{Irr}(\g)}\mathrm{tr}(Q_{\alpha}^{-1}C_{\alpha}^*B_{\alpha}^*B_{\alpha}C_{\alpha})\frac{1}{d_{\alpha}}\\
&\leq \sum_{\alpha\in \mathrm{Irr}(\g)}\mathrm{tr}(Q_{\alpha}^{-1}C_{\alpha}^*C_{\alpha})\frac{1}{d_{\alpha}}\\
&=\sum_{\alpha\in \mathrm{Irr}(\g)}\sum_{i,j=1}^{n_{\alpha}}(Q_{\alpha}^{-1})_{j,j}|c^{\alpha}_{i,j}|^2\frac{1}{d_{\alpha}}=\left \|v\right\|_{L^2(\g)}^2.
\end{align*}

\end{proof}

\begin{proof}[Proof of Theorem \ref{main6}]
 One direction is trivial. For the other direction, suppose that $\Delta:L^{\infty}(\g)\rightarrow L^{\infty}(\g)\otimes_{eh}L^{\infty}(\g)$ is completely bounded. Then, for any 
\[f=\sum_{\alpha\in \mathrm{Irr}(\g)}\sum_{i,j=1}^{n_{\alpha}}d_{\alpha}(\widehat{f}(\alpha)Q_{\alpha})_{i,j}u^{\alpha}_{j,i}\in \mathrm{Pol}(\mathbb{G}),\]
\[ \Delta (f)=\sum_{\alpha\in \mathrm{Irr}(\g)}\sum_{i,j=1}^{n_{\alpha}}\sum_{k=1}^{n_{\alpha}}d_{\alpha}(\widehat{f}(\alpha)Q_{\alpha})_{i,j}u^{\alpha}_{j,k}\otimes u^{\alpha}_{k,i}\in L^{\infty}(\g)\otimes_{eh}L^{\infty}(\g).\]

For each $(B_{\alpha})_{\alpha\in \mathrm{Irr}(\g)}\in \mathrm{Ball}(\ell^{\infty}(\widehat{\g}))$, pick $T_B$ of Lemma \ref{lem7}. Then $(\Delta(f))(T_B)$ sends $1_A \in L^2(\g)$ to 
\begin{align*}
x&:=\sum_{\alpha\in \mathrm{Irr}(\g)}\sum_{i,j,k,p=1}^{n_{\alpha}}d_{\alpha}(\widehat{f}(\alpha)Q_{\alpha})_{i,j}(Q_{\alpha}^{-1})_{k,k}(B_{\alpha})_{p,i}u^{\alpha}_{j,k}(u^{\alpha}_{p,k})^*\in L^2(\g).
\end{align*}

Then 
\begin{align*}
\left \|\Delta\right\|_{cb} \left \|f\right\|_{L^{\infty}(\g)}&\geq \left \|x\right\|_{L^2(\g)}\\
&\geq |h(x)|\\
&=|\sum_{\alpha\in \mathrm{Irr}(\g)}\sum_{i,j,k=1}^{n_{\alpha}}d_{\alpha}(\widehat{f}(\alpha)Q_{\alpha})_{i,j}\frac{1}{d_{\alpha}}(B_{\alpha})_{j,i}|\\
&=|\sum_{\alpha\in \mathrm{Irr}(\g)}n_{\alpha}\mathrm{tr}(\widehat{f}(\alpha)Q_{\alpha}B_{\alpha})|.
\end{align*}

Since $B_{\alpha}$ is arbitrarily chosen, we have a bounded map $\Phi_*:C_r(\g)\rightarrow \ell^1(\widehat{\g}), f\mapsto \displaystyle (\frac{n_{\alpha}}{d_{\alpha}}\widehat{f}(\alpha))_{\alpha\in \mathrm{Irr}(\g)}$ and it induces the bounded dual map $\Phi:\ell^{\infty}(\widehat{\g})\rightarrow M(\g)$ determined by
\[\langle f, \Phi(A) \rangle_{C_r(\g),M(\g)}=\sum_{\alpha\in \mathrm{Irr}(\g)}n_{\alpha}\mathrm{tr}(\widehat{f}(\alpha)Q_{\alpha}A(\alpha)). \]

Now, we can find that the map $\Psi:\ell^{\infty}(\widehat{\g})\rightarrow M(\g)$, $\displaystyle A=(A(\alpha))_{\alpha\in \mathrm{Irr}(\g)}\mapsto \mu_A\sim \sum_{\alpha\in \mathrm{Irr}(\g)}d_{\alpha}\mathrm{tr}(A_{\alpha}(\frac{n_{\alpha}}{d_{\alpha}}\mathrm{Id}_{n_{\alpha}})Q_{\alpha}u^{\alpha})$, is bounded since for any $f\in \mathrm{Pol}(\g)$ we have
\[\langle f^*,\Psi(A)\rangle_{C_r(\g),M(\g)}= \overline{\langle f,\Phi(A^*)\rangle}_{C_r(\g),M(\g)}\]
by Lemma \ref{lem1}.

Finally, Proposition \ref{lem2} and Theorem \ref{thm-nonKac} say that
\[\sum_{\alpha\in \mathrm{Irr}(\g)}\frac{d_{\alpha}}{n_{\alpha}}\mathrm{tr}(Q_{\alpha} \frac{n_{\alpha}^2}{d_{\alpha}^2}\mathrm{Id}_{n_{\alpha}})=\sum_{\alpha\in \mathrm{Irr}(\g)}n_{\alpha}<\infty,\]
so that $A$ should be finite dimensional.
\end{proof}

{\bf Acknowledgement.}  The author is grateful to Professor Hun Hee Lee for his encouragement and to Professor Gilles Pisier for his helpful comments, particularly on Remark \ref{rmk-Khintchine}.

\bibliographystyle{alpha}
\bibliography{RandomFourier}

\end{document}